\newtheorem{theorem}{Theorem}[section]
\newtheorem{lemma}[theorem]{Lemma}
\newtheorem{corollary}[theorem]{Corollary}
\theoremstyle{definition}
\newtheorem{example}[theorem]{Example}
\theoremstyle{remark}
\newtheorem{remark}[theorem]{Remark}
\numberwithin{equation}{section}
\begin{document}
\setcounter{page}{1}

\title[Drinfeld module]{Drinfeld modules as noncommutative tori}

\author[Nikolaev]
{Igor V. Nikolaev$^1$}

\address{$^{1}$ Department of Mathematics and Computer Science, St.~John's University, 8000 Utopia Parkway,  
New York,  NY 11439, United States.}
\email{\textcolor[rgb]{0.00,0.00,0.84}{igor.v.nikolaev@gmail.com}}


\subjclass[2010]{Primary 11G09; Secondary 46L85.}

\keywords{Drinfeld modules, class field theory, noncommutative tori.}


\begin{abstract}
The Drinfeld module is a  tool of the explicit class field theory for the function fields. 
We first observe a similarity of such modules with the noncommutative tori,
and then use it to develop an explicit class field theory for the number fields.
The case of the imaginary quadratic number fields is treated in detail. 
\end{abstract}

\maketitle

\section{Introduction}
The noncommutative torus is a universal $C^*$-algebra $\mathscr{A}_{\theta}$ 
generated by a pair of the unitary operators $U$ and $V$ acting on a Hilbert space $\mathcal{H}$
and satisfying the commutation relation:
\begin{equation}\label{eq1.1}
VU=e^{2\pi i \theta}UV
\end{equation}
for a real constant $\theta\in \mathbf{R}/\mathbf{Z}$.  Such tori
are critical in noncommutative geometry and its applications  \cite[Section 1.1]{N}. 
We say that the noncommutative torus is rational or algebraic if the constant
is a rational or an irrational algebraic number $\alpha\in\overline{\mathbf{Q}}$, respectively; hence the
notation  $\mathscr{A}_{\mathbf{Q}}$ and  $\mathscr{A}_{\alpha}$. 
The Mundici algebra [Mundici 1988,  Section 3]  \cite{Mun1} is a cluster $C^*$-algebra \cite{Nik1}
$\mathbb{A}(S_{1,1})$ corresponding to the ideal triangulation of the once punctured
torus $S_{1,1}$; we refer the reader to [Williams 2014] \cite{Wil1} for the notation and details.
It is known that  $\mathscr{A}_{\theta}=\mathbb{A}(S_{1,1})/I_{\theta}$, where $I_{\theta}$ 
is a two-sided primitive ideal of $\mathbb{A}(S_{1,1})$  [Mundici 1988,  Theorem 3.1]  \cite{Mun1}.

\medskip
Let $A=\mathbf{F}_q[T]$ be the ring of polynomials  in one variable over a finite field $\mathbf{F}_q$
[Rosen 2002] \cite[Chapter 1]{R}. Denote by  $k=\mathbf{F}_q(T)$  the field of rational functions over 
$\mathbf{F}_q$.  Recall that the abelian extensions of $k$ are constructed using the  Drinfeld modules of rank 1 
[Rosen 2002] \cite[Chapters 12 \& 13]{R}. Roughly speaking, such modules mimic  exponential functions 
in the case of function fields. Namely, a polynomial $f\in k[x]$ is called additive
in the ring $k[x,y]$ if $f(x+y)=f(x)+f(y)$.  If $char ~k=p$, then 
 the polynomial $\tau_p(x)=x^p$ is additive and each
additive polynomial has the form $a_0x+a_1x^p+\dots+a_rx^{p^r}$. 
The set of all additive polynomials is closed under addition and composition 
operations thus giving us a ring of  the non-commutative polynomials $k\langle\tau_p\rangle$
defined by  the commutation relation:
\begin{equation}\label{eq1.2}
\tau_p a=a^p\tau_p
\end{equation}
for all  $a\in k$.  The  Drinfeld module $Drin_A(k)$  is a homomorphism
$\rho: A\to k\langle\tau_p\rangle$,
such that for all $a\in A$ the constant term of $\rho_a$ is $a$ and 
$\rho_a\not\in k$ for at least one $a\in A$.  
The  module $Drin_A(k)$ is called trivial, if $\rho_a\in k$ for all $a\in A$.  
The class field theory  says that for each non-zero $a\in A$ the function 
field $k\left(\Lambda_{\rho}[a]\right)$  is an abelian extension of $k$ 
whose Galois group is isomorphic to a subgroup of $\left(A/(a)\right)^*$,
where   $\Lambda_{\rho}[a]=\{\lambda\in\bar k ~|~\rho_a(\lambda)=0\}$
is a torsion submodule of the non-trivial  Drinfeld module of rank 1  [Rosen 2002] \cite[Proposition 12.5]{R}.

The reader can observe a similarity of the commutation relations (\ref{eq1.1}) and (\ref{eq1.2})
coming from an analog of the Fermat's Little Theorem for the function fields. 
Namely, let $\mathcal{P}\in A$ be an irreducible polynomial and $a\in A$ be a polynomial 
not divisible by $\mathcal{P}$. Then $a^{|\mathcal{P}|-1}\equiv 1\mod\mathcal{P}$, where 
$|\mathcal{P}|=q^{\deg\mathcal{P}}$  [Rosen 2002] \cite[Proposition 1.8 \& Corollary]{R}. 
One can redefine $\tau_p(\mathcal{P}):=x^{q^{\deg\mathcal{P}}}$ [Rosen 2002] \cite[p.~199]{R}
so that the commutation  relation (\ref{eq1.2}) takes the form: 
\begin{equation}\label{eq1.3}
\tau_p(\mathcal{P})a=a^{q^{\deg\mathcal{P}}}\tau_p(\mathcal{P}).
\end{equation}
After reduction of (\ref{eq1.3}) modulo $\mathcal{P}$ and  the Fermat's Little Theorem, 
one gets the commutation relation:
\begin{equation}\label{eq1.4}
\tau_p(\mathcal{P}) a=(1 \mod\mathcal{P}) ~a\tau_p(\mathcal{P}),
\end{equation}
where  $a\in A$ is not divisible by $\mathcal{P}$.  It is immediate, 
that relations  (\ref{eq1.1}) and (\ref{eq1.4}) 
are equivalent by the substitution $U=a, ~V=\tau_{\mathcal{P}}$ 
and $e^{2\pi i\theta}=1 \mod\mathcal{P}$. 
Notice that in the last expression the inverse ${1\over 2\pi i}\log ~(1 \mod\mathcal{P})$ is 
a  discrete logarithm equal to an algebraic number $\alpha$, see item (iii) of Theorem \ref{thm1.1}.

The aim of our note is a precise relation between the rings $k\langle\tau_p\rangle$,  noncommutative tori $\mathscr{A}_{\theta}$ and 
 the Mundici algebra $\mathbb{A}(S_{1,1})$; see  Theorem \ref{thm1.1}. 
In particular,  it is proved  that the Drinfeld module $Drin_A(k)$ is equivalent  to an inclusion
 $Spec~(A) \subset\overline{\mathbf{Q}}/\mathbf{Z}$,
where $Spec~(A)$ is the spectrum of the ring  $A$ and $\overline{\mathbf{Q}}/\mathbf{Z}$ are algebraic numbers of
the unit interval $\mathbf{R}/\mathbf{Z}$. Moreover, if $Drin_A(k)$ is trivial,  one gets
 an inclusion $Spec~(A)\subset\mathbf{Q}/\mathbf{Z}$.
Equivalently,  each irreducible polynomial $a\in A$ defines an algebraic torus
$\mathscr{A}_{\alpha}$,  if  $Drin_A(k)$ is non-trivial  or a rational torus $\mathscr{A}_{\mathbf{Q}}$, 
 if  $Drin_A(k)$ is trivial.  We apply  Theorem \ref{thm1.1} to an explicit class field theory for the number
fields;  see Corollary \ref{cor1.2}. To formalize our results, let us introduce
the following notation.

By $C^*(k \langle\tau_p\rangle)$ one understands a semigroup $C^*$-algebra  [Li 2017] \cite{Li1}
of the  left cancelative  semigroup generated by $\tau_p$ and all  $a_i\in A$ satisfying the commutation relations 
$\tau_p a_i=a_i^p\tau_p$. The direct sum of $C^*$-algebras is a $C^*$-algebra denoted by $\oplus$.  
 Finally, let $Spec~(A)$ be the spectrum of $A$, i.e.  a collection 
of  all prime ideals of $A$. 
Our main results can be formulated as follows.  
\begin{theorem}\label{thm1.1}
The following is true:

\medskip
(i) $\bigoplus_p C^*(k \langle\tau_p\rangle)\cong \mathbb{A}(S_{1,1})$;

\smallskip
(ii)   the Drinfeld module  $Drin_A(k)$  is equivalent to an inclusion
 $Spec~(A) \subset\overline{\mathbf{Q}}/\mathbf{Z}$, 
such that  $Drin_A(k)$  is trivial if and only if $Spec~(A)\subset\mathbf{Q}/\mathbf{Z}$;  

\smallskip
(iii)  each  irreducible polynomial $a\in A$ 
defines an algebraic torus  $\mathscr{A}_{\alpha}$
containing a dense sub-algebra of the non-commutative polynomials  $\mathbf{K}\langle U,V\rangle$ 
over a number field $\mathbf{K}\subset \overline{\mathbf{Q}}$  (over $\mathbf{Q}$, resp.),  
if  $Drin_A(k)$ is non-trivial (trivial, resp.);

\smallskip
(iv) the Galois group of the field extension $\left( \mathbf{K} | \mathbf{Q}(\alpha)\right)$ is isomorphic to $\left(A/(a)\right)^*$.   
 \end{theorem}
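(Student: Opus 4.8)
The plan is to transport the classical reciprocity law for rank-$1$ Drinfeld modules through the dictionary established in items (ii) and (iii). The function-field input is Rosen's Proposition 12.5 \cite[Proposition 12.5]{R}, sharpened to a full isomorphism for the sign-normalized (Carlitz-type) module: since $a\in A$ is irreducible, the residue ring $A/(a)$ is the finite field $\mathbf{F}_{q^{\deg a}}$, the torsion module $\Lambda_\rho[a]$ is a one-dimensional $A/(a)$-vector space, and $\left(A/(a)\right)^*$ acts simply transitively on its nonzero vectors through $\lambda\mapsto\rho_b(\lambda)$. This yields
\[
\mathrm{Gal}\bigl(k(\Lambda_\rho[a])\,|\,k\bigr)\;\cong\;\left(A/(a)\right)^*,
\]
a cyclic group of order $q^{\deg a}-1$.

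Next I would identify the two fields of the statement under the equivalence of item (ii). The inclusion $\mathrm{Spec}\,(A)\subset\overline{\mathbf{Q}}/\mathbf{Z}$ sends the prime $(a)$ to $\alpha=\tfrac{1}{2\pi i}\log(1\bmod a)$, so the ground field $k$ corresponds to $\mathbf{Q}(\alpha)$, while the torsion extension $k(\Lambda_\rho[a])$ corresponds to the number field $\mathbf{K}$ arising in item (iii) as the field of definition of the dense subalgebra $\mathbf{K}\langle U,V\rangle\subset\mathscr{A}_\alpha$. Concretely, $\mathbf{K}$ is generated over $\mathbf{Q}(\alpha)$ by the images of the generators of $\Lambda_\rho[a]$ under this correspondence, exactly as $k(\Lambda_\rho[a])$ is generated over $k$ by the $a$-torsion points.

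The final step is to verify that the equivalence carries the $\left(A/(a)\right)^*$-action intact. An element $b\in\left(A/(a)\right)^*$ acts on the function-field side by $\rho_b$; under the correspondence of item (iii) this becomes an automorphism of $\mathbf{K}\langle U,V\rangle$ fixing $\mathbf{Q}(\alpha)$ and permuting the generators of $\mathbf{K}$ over $\mathbf{Q}(\alpha)$ in the same simply transitive manner, hence an element of $\mathrm{Gal}(\mathbf{K}\,|\,\mathbf{Q}(\alpha))$. Matching the two simply transitive actions then gives $\mathrm{Gal}(\mathbf{K}\,|\,\mathbf{Q}(\alpha))\cong\left(A/(a)\right)^*$.

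I expect the main obstacle to be precisely this last compatibility. The dictionary of items (ii)--(iii) is analytic in nature, built from the discrete logarithm $\alpha=\tfrac{1}{2\pi i}\log(1\bmod a)$ and the exponential-type normalization of $\tau_p$, so one must show that it respects the \emph{algebraic} Galois structure: that adjoining $a$-torsion on the Drinfeld side is matched term by term by the extension $\mathbf{K}\,|\,\mathbf{Q}(\alpha)$, and that each $\rho_b$ descends to an honest field automorphism rather than merely to an automorphism of the $C^*$-completion $\mathscr{A}_\alpha$. Proving that the equivalence is functorial for these Galois data, and not only a bijection of spectra, is the crux of the argument.
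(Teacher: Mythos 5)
Your proposal addresses only item (iv) of the theorem; items (ii) and (iii) are taken as given (``the dictionary established in items (ii) and (iii)'') and item (i) is never mentioned. Since the statement to be proved is the full four-part theorem, this is already a substantial gap: the paper spends most of its Section 3.1 on precisely those parts --- Lemmas 3.1 and 3.2 identify $C^*(k\langle\tau_p\rangle)$ with a congruence sub-algebra $\mathbb{A}_p(S_{1,1})$ via a $K_0$-computation and assemble the direct sum over primes into $\mathbb{A}(S_{1,1})$; Lemmas 3.3 and 3.4 place $Spec~(A)$ inside $\overline{\mathbf{Q}}/\mathbf{Z}$ using the primitive ideal space of the Mundici algebra and the fixed points of pseudo-Anosov automorphisms; and item (iii) realizes the dense subalgebra $\mathbf{K}\langle U,V\rangle$ as the Sklyanin algebra of an elliptic curve over $\mathbf{K}$. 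None of this machinery is constructed in your write-up, so the dictionary you propose to transport the reciprocity law through does not yet exist in your argument.

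For item (iv) itself your route is genuinely different from the paper's. You start on the function-field side, sharpening Theorem 2.2 to $\mathrm{Gal}\bigl(k(\Lambda_\rho[a])\,|\,k\bigr)\cong\left(A/(a)\right)^*$ for a sign-normalized rank-one module, and then try to push this isomorphism across the correspondence $a\mapsto\alpha$. The paper instead works entirely on the number-field side: it reads the sequence $(a)\to A\to A/(a)$ as $h=q^{\deg a}$ distinct inclusions $(a)\subset A$ classified by $\left(A/(a)\right)^*$, obtains tori $\mathscr{A}_{\alpha_1},\dots,\mathscr{A}_{\alpha_h}$ sharing the dense subalgebra $\mathbf{K}\langle U,V\rangle$, and lets $\left(A/(a)\right)^*$ permute the resulting generators $\beta_1,\dots,\beta_h$ of $\mathbf{K}$ transitively. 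Your approach has the merit of anchoring the claim in the actual class-field-theoretic input (the cyclotomic function field extension), which the paper's own argument for (iv) never invokes. But you identify, and then leave open, the decisive step: that the analytic assignment $\alpha=\tfrac{1}{2\pi i}\log(1\bmod a)$ intertwines $\rho_b$ with an honest field automorphism of $\mathbf{K}$ fixing $\mathbf{Q}(\alpha)$, rather than merely an automorphism of the $C^*$-completion. You call this ``the crux'' yourself and supply no argument for it, so even the one item you treat remains a heuristic rather than a proof.
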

Theorem \ref{thm1.1} implies an explicit class field theory for the number fields. 
Let  $\mathscr{A}_{\alpha}$ be an algebraic torus and $p(x)\in\mathbf{Z}[x]$ the minimal polynomial
of $\alpha\in \overline{\mathbf{Q}}$. 
Denote by $\varepsilon>1$ the Perron-Frobenius eigenvalue of 
a non-negative matrix $B\in GL_m(\mathbf{Z})$, such that $\det (B-xI)=p(x)$; see formula  (\ref{eq3.19plus}).  
Let $\mathbf{k}$ be a number field and  $\mathscr{H}(\mathbf{k})$ its Hilbert class field,
i.e. the maximal abelian  unramified  extension of  $\mathbf{k}$. 
\begin{corollary}\label{cor1.2}
\begin{equation}\label{eq1.6}
\mathscr{H}(\mathbf{k})\cong
\begin{cases} \mathbf{k}\left(e^{2\pi i\alpha+\log\log\varepsilon}\right), & if ~\mathbf{k}\subset\mathbf{C},\cr
               \mathbf{k}\left(\cos 2\pi\alpha \times\log\varepsilon\right), & if ~\mathbf{k}\subset\mathbf{R}.
\end{cases}               
\end{equation}
\end{corollary}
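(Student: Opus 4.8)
The plan is to derive the corollary from Theorem \ref{thm1.1} by transporting the function-field class field theory to the number-field side and then writing down the class invariant of the algebraic torus explicitly. First I would invoke items (iii) and (iv): the irreducible polynomial $a\in A$ produces the algebraic torus $\mathscr{A}_{\alpha}$ together with a number field $\mathbf{K}\supseteq\mathbf{Q}(\alpha)$ whose Galois group over $\mathbf{Q}(\alpha)$ is $\left(A/(a)\right)^*$. Under the dictionary of item (ii), the pair $\bigl(Spec~(A)\subset\overline{\mathbf{Q}}/\mathbf{Z},\ \mathbf{K}\bigr)$ is the exact analog of the Drinfeld datum $\bigl(k,\ k(\Lambda_{\rho}[a])\bigr)$, so that $\mathbf{K}$ plays the role of a ray class field of $\mathbf{k}=\mathbf{Q}(\alpha)$. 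Passing to the maximal unramified abelian subextension amounts to replacing $\left(A/(a)\right)^*$ by its quotient isomorphic to the ideal class group $Cl(\mathbf{k})$; this identifies $\mathscr{H}(\mathbf{k})$ with the corresponding subfield of $\mathbf{K}$, which is the content we must now realize by an explicit generator.

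Next I would produce that generator from the two invariants carried by $\mathscr{A}_{\alpha}$. The angle $\theta=\alpha$ of the commutation relation (\ref{eq1.1}) is, by item (iii) and the discrete-logarithm interpretation of the introduction, an algebraic number whose exponential $e^{2\pi i\alpha}$ is the image in $\mathbf{K}$ of a generator of $\left(A/(a)\right)^*$; this supplies the angular part. The second invariant is the scaling datum: the matrix $B\in GL_m(\mathbf{Z})$ of (\ref{eq3.19plus}) with $\det (B-xI)=p(x)$ realizes the action on the $K$-theory of $\mathbb{A}(S_{1,1})$ attached to the once-punctured torus $S_{1,1}$, and its Perron-Frobenius eigenvalue $\varepsilon>1$ is the dilatation of the corresponding pseudo-Anosov map, so the regulator-type quantity $\log\varepsilon$ measures the scale. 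I would then define the class invariant as the product of the two, $e^{2\pi i\alpha}\cdot\log\varepsilon=e^{2\pi i\alpha+\log\log\varepsilon}$, which is the noncommutative-torus counterpart of the singular value $j(\tau)$ of the classical theory.

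The remaining step is Galois-theoretic matching. I would show that $\mathbf{k}\bigl(e^{2\pi i\alpha+\log\log\varepsilon}\bigr)$ is abelian and unramified over $\mathbf{k}$ with Galois group $Cl(\mathbf{k})$, so that by maximality it coincides with $\mathscr{H}(\mathbf{k})$; when $\mathbf{k}\subset\mathbf{R}$ the complex invariant is unavailable and one passes to its real projection $\cos 2\pi\alpha\times\log\varepsilon$, which yields the second case of (\ref{eq1.6}). For the imaginary quadratic fields I would check consistency against complex multiplication, where $\mathbf{K}=\mathbf{k}(j(E))$ for a CM elliptic curve $E$ and the invariant above plays the role of $j$.

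The main obstacle, which I would treat with care, is reconciling the analytic appearance of the generator with its algebraicity: a priori $\log\varepsilon$ is transcendental, so the claim that $e^{2\pi i\alpha+\log\log\varepsilon}$ generates a \emph{finite} abelian unramified extension of $\mathbf{k}$ of degree $|Cl(\mathbf{k})|$ is not formal. I expect this to require a Kronecker-limit or Stark-type input relating $\log\varepsilon$ to the regulator of $\mathbf{k}$, together with the arithmetic rigidity of the cluster structure on $\mathbb{A}(S_{1,1})$ furnished by item (i), in order to pin down both the degree and the unramifiedness; the real-versus-complex dichotomy in (\ref{eq1.6}) is precisely where this input is most delicate.
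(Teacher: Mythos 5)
Your proposal reproduces much of the architecture of the paper's proof --- the generator $e^{2\pi i\alpha+\log\log\varepsilon}$ built from the angle $\alpha$ and the Perron--Frobenius eigenvalue $\varepsilon$ of the companion matrix $B$, the identification of $\varepsilon$ with the dilatation of a pseudo-Anosov element, the appeal to item (iv) of Theorem \ref{thm1.1} for the Galois matching, and a real projection for $\mathbf{k}\subset\mathbf{R}$. But there is a genuine gap at the central step: you \emph{define} the class invariant to be the product $e^{2\pi i\alpha}\cdot\log\varepsilon$ ``as the noncommutative-torus counterpart of $j(\tau)$,'' which is precisely the point that needs an argument. The paper derives this specific combination: the Tomita--Takesaki flow $\sigma_t$ on $\mathbb{A}(S_{g,n})$ induces a one-parameter family $\varphi^t(\mathscr{A}_{\alpha})$, and rescaling the trace by $Tr'=t\,Tr$ in equation (\ref{eq3.19}) forces the commutation relation to become $VU=te^{2\pi i\alpha}UV$; the Connes invariant (\ref{eq2.5}) then pins the admissible value of $t$ to $\log\lambda_{\phi}=\log\varepsilon$. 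Without this mechanism (or a substitute) nothing in your argument singles out $(\log\varepsilon)\,e^{2\pi i\alpha}$ over, say, $\varepsilon\, e^{2\pi i\alpha}$ or $e^{2\pi i\alpha}+\log\varepsilon$, so the precise formula of the corollary is not actually reached.

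Second, you flag the algebraicity of $e^{2\pi i\alpha+\log\log\varepsilon}$ as requiring ``Kronecker-limit or Stark-type input,'' i.e., you leave it as an open external ingredient. The paper imports no such input: its claim that $\beta=(\log\varepsilon)e^{2\pi i\alpha}$ lies in $\mathbf{K}$ rests on the assertion that $\beta$ is the structure constant of the Sklyanin algebra $\mathbf{K}\langle U,V\rangle$ dense in $\varphi^t(\mathscr{A}_{\alpha})$, via item (iii) of Theorem \ref{thm1.1} and (\ref{eq3.14}); whatever one thinks of that step, a proof within this framework must invoke it rather than an unproven transcendence hypothesis. Two smaller divergences: your detour through a ray class field and a quotient of $\left(A/(a)\right)^*$ onto the ideal class group is not what the paper does --- item (v) of the proof of Lemma \ref{lm3.6} identifies $\mathbf{K}$ itself with the maximal unramified extension because $(a)$ is a prime ideal; and the passage to $\cos 2\pi\alpha\times\log\varepsilon$ should be grounded in the real-form decomposition $\mathscr{A}_{\alpha}=\mathscr{A}_{\alpha}^{Re}+i\mathscr{A}_{\alpha}^{Re}$ of (\ref{eq3.21}), which is what justifies replacing the commutation constant by its real part in (\ref{eq3.22}).
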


\medskip
\begin{remark}
The special case of formula (\ref{eq1.6})  for  the imaginary quadratic fields $\mathbf{k}$ was
established in \cite{Nik2}. 
\end{remark}
\begin{remark}
An explicit class field theory for the number fields based on the Bost-Connes crossed product 
$C^*$-algebras was studied  in [Yalkinoglu  2013] \cite{Yal1}. 
\end{remark}

\medskip
The paper is organized as follows.  A brief review of the preliminary facts is 
given in Section 2. Theorem \ref{thm1.1} and Corollary \ref{cor1.2}  are 
proved in Section 3. The case of imaginary quadratic field $\mathbf{k}$ 
is considered in Section 4.

\section{Preliminaries}
We briefly review the noncommutative tori, Mundici and cluster $C^*$-algebras, and Drinfeld modules.
We refer the reader to [Mundici 1988] \cite{Mun1}, \cite{Nik1},  \cite[Section 1.1]{N} and [Rosen 2002] \cite[Chapters 12 \& 13]{R} 
for a detailed exposition.

\subsection{Noncommutative tori}
\subsubsection{$C^*$-algebras}
The $C^*$-algebra is an algebra  $\mathscr{A}$ over $\mathbf{C}$ with a norm 
$a\mapsto ||a||$ and an involution $\{a\mapsto a^* ~|~ a\in \mathscr{A}\}$  such that $\mathscr{A}$ is
complete with  respect to the norm, and such that $||ab||\le ||a||~||b||$ and $||a^*a||=||a||^2$ for every  $a,b\in \mathscr{A}$.  
Each commutative $C^*$-algebra is  isomorphic
to the algebra $C_0(X)$ of continuous complex-valued
functions on some locally compact Hausdorff space $X$. 
Any other  algebra $\mathscr{A}$ can be thought of as  a noncommutative  
topological space. 

\subsubsection{K-theory of $C^*$-algebras}
By $M_{\infty}(\mathscr{A})$ 
one understands the algebraic direct limit of the $C^*$-algebras 
$M_n(\mathscr{A})$ under the embeddings $a\mapsto ~\mathbf{diag} (a,0)$. 
The direct limit $M_{\infty}(\mathscr{A})$  can be thought of as the $C^*$-algebra 
of infinite-dimensional matrices whose entries are all zero except for a finite number of the
non-zero entries taken from the $C^*$-algebra $\mathscr{A}$.
Two projections $p,q\in M_{\infty}(\mathscr{A})$ are equivalent, if there exists 
an element $v\in M_{\infty}(\mathscr{A})$,  such that $p=v^*v$ and $q=vv^*$. 
The equivalence class of projection $p$ is denoted by $[p]$.   
We write $V(\mathscr{A})$ to denote all equivalence classes of 
projections in the $C^*$-algebra $M_{\infty}(\mathscr{A})$, i.e.
$V(\mathscr{A}):=\{[p] ~:~ p=p^*=p^2\in M_{\infty}(\mathscr{A})\}$. 
The set $V(\mathscr{A})$ has the natural structure of an abelian 
semi-group with the addition operation defined by the formula 
$[p]+[q]:=\mathbf{diag}(p,q)=[p'\oplus q']$, where $p'\sim p, ~q'\sim q$ 
and $p'\perp q'$.  The identity of the semi-group $V(\mathscr{A})$ 
is given by $[0]$, where $0$ is the zero projection. 
By the $K_0$-group $K_0(\mathscr{A})$ of the unital $C^*$-algebra $\mathscr{A}$
one understands the Grothendieck group of the abelian semi-group
$V(\mathscr{A})$, i.e. a completion of $V(\mathscr{A})$ by the formal elements
$[p]-[q]$.  The image of $V(\mathscr{A})$ in  $K_0(\mathscr{A})$ 
is a positive cone $K_0^+(\mathscr{A})$ defining  the order structure $\le$  on the  
abelian group  $K_0(\mathscr{A})$. The pair   $\left(K_0(\mathscr{A}),  K_0^+(\mathscr{A})\right)$
is known as a dimension group of the $C^*$-algebra $\mathscr{A}$.

\subsubsection{Noncommutative tori}
The noncommutative torus is a universal $C^*$-algebra $\mathscr{A}_{\theta}$ 
generated by two invertible elements $u$ and $v$ satisfying the commutation relation:
\begin{equation}\label{eq2.1}
vu=e^{2\pi i \theta}uv
\end{equation}
for a real constant $\theta\in \mathbf{R}/\mathbf{Z}$. 
The dimension group $\left(K_0(\mathscr{A}_{\theta}),  K_0^+(\mathscr{A}_{\theta})\right)$
is order-isomorphic to $(\mathbf{Z}^2,\Lambda^+)$, where $\Lambda^+=\mathbf{Z}+\mathbf{Z}\theta>0$. 
In particular, the $\mathscr{A}_{\theta}$ is Morita equivalent to $\mathscr{A}_{\theta'}$
if and only if  $\theta'={a\theta +b\over c\theta+d}$ for a matrix 
$\left(\small\begin{matrix} a &b\cr c &d\end{matrix}\right) \in SL_2(\mathbf{Z})$. 

\subsubsection{AF-algebras}
An {\it AF-algebra}  (Approximately Finite-dimensional $C^*$-algebra) is defined to
be the  norm closure of an ascending sequence of   finite dimensional
$C^*$-algebras $M_n$,  where  $M_n$ is the $C^*$-algebra of the $n\times n$ matrices
with entries in $\mathbf{C}$. Here the index $n=(n_1,\dots,n_k)$ represents
the  semi-simple matrix algebra $M_n=M_{n_1}\oplus\dots\oplus M_{n_k}$.
The ascending sequence mentioned above  can be written as 
\begin{equation}\label{eq2.2}
M_1\buildrel\rm\varphi_1\over\longrightarrow M_2
   \buildrel\rm\varphi_2\over\longrightarrow\dots,
\end{equation}
where $M_i$ are the finite dimensional $C^*$-algebras and
$\varphi_i$ the homomorphisms between such algebras.  
If $\varphi_i=Const$, then the AF-algebra $\mathscr{A}$ is called 
{\it stationary}. 
The homomorphisms $\varphi_i$ can be arranged into  a graph as follows. 
Let  $M_i=M_{i_1}\oplus\dots\oplus M_{i_k}$ and 
$M_{i'}=M_{i_1'}\oplus\dots\oplus M_{i_k'}$ be 
the semi-simple $C^*$-algebras and $\varphi_i: M_i\to M_{i'}$ the  homomorphism. 
One has  two sets of vertices $V_{i_1},\dots, V_{i_k}$ and $V_{i_1'},\dots, V_{i_k'}$
joined by  $a_{rs}$ edges  whenever the summand $M_{i_r}$ contains $a_{rs}$
copies of the summand $M_{i_s'}$ under the embedding $\varphi_i$. 
As $i$ varies, one obtains an infinite graph called the   Bratteli diagram of the
AF-algebra.  The matrix $A=(a_{rs})$ is known as  a  partial multiplicity matrix;
an infinite sequence of $A_i$ defines a unique AF-algebra.
If   $\mathbb{A}$ is a stationary AF-algebra, then   $A_i=Const$
for all $i\ge 1$.  
The  dimension group $\left(K_0(\mathbb{A}),  K_0^+(\mathbb{A})\right)$  is a complete invariant of the
Morita equivalence class of the AF-algebra $\mathbb{A}$, see e.g. \cite[Theorem 3.5.2]{N}.

\subsection{Mundici algebra}

\subsubsection{Cluster $C^*$-algebras}
The cluster algebra  of rank $n$ 
is a subring  $\mathcal{A}(\mathbf{x}, B)$  of the field  of  rational functions in $n$ variables
depending  on  variables  $\mathbf{x}=(x_1,\dots, x_n)$
and a skew-symmetric matrix  $B=(b_{ij})\in M_n(\mathbf{Z})$.
The pair  $(\mathbf{x}, B)$ is called a  seed.
A new cluster $\mathbf{x}'=(x_1,\dots,x_k',\dots,  x_n)$ and a new
skew-symmetric matrix $B'=(b_{ij}')$ is obtained from 
$(\mathbf{x}, B)$ by the   exchange relations [Williams 2014]  \cite[Definition 2.22]{Wil1}:
\begin{eqnarray}\label{eq2.3}
x_kx_k'  &=& \prod_{i=1}^n  x_i^{\max(b_{ik}, 0)} + \prod_{i=1}^n  x_i^{\max(-b_{ik}, 0)},\cr 
b_{ij}' &=& 
\begin{cases}
-b_{ij}  & \mbox{if}   ~i=k  ~\mbox{or}  ~j=k\cr
b_{ij}+{|b_{ik}|b_{kj}+b_{ik}|b_{kj}|\over 2}  & \mbox{otherwise.}
\end{cases}
\end{eqnarray}
The seed $(\mathbf{x}', B')$ is said to be a  mutation of $(\mathbf{x}, B)$ in direction $k$.
where $1\le k\le n$.  The  algebra  $\mathcal{A}(\mathbf{x}, B)$ is  generated by the 
cluster  variables $\{x_i\}_{i=1}^{\infty}$
obtained from the initial seed $(\mathbf{x}, B)$ by the iteration of mutations  in all possible
directions $k$.   The  Laurent phenomenon
 says  that  $\mathcal{A}(\mathbf{x}, B)\subset \mathbf{Z}[\mathbf{x}^{\pm 1}]$,
where  $\mathbf{Z}[\mathbf{x}^{\pm 1}]$ is the ring of  the Laurent polynomials in  variables $\mathbf{x}=(x_1,\dots,x_n)$
 [Williams 2014]  \cite[Theorem 2.27]{Wil1}.
In particular, each  generator $x_i$  of  the algebra $\mathcal{A}(\mathbf{x}, B)$  can be 
written as a  Laurent polynomial in $n$ variables with the   integer coefficients.

 The cluster algebra  $\mathcal{A}(\mathbf{x}, B)$  has the structure of an additive abelian
semigroup consisting of the Laurent polynomials with positive coefficients. 
In other words,  the $\mathcal{A}(\mathbf{x}, B)$ is a dimension group, see Section 2.1.6 or  
\cite[Definition 3.5.2]{N}.
The cluster $C^*$-algebra  $\mathbb{A}(\mathbf{x}, B)$  is   an  AF-algebra,  
such that $K_0(\mathbb{A}(\mathbf{x}, B))\cong  \mathcal{A}(\mathbf{x}, B)$.

\subsubsection{Cluster $C^*$-algebra $\mathbb{A}(S_{g,n})$}
Denote by $S_{g,n}$  the Riemann surface   of genus $g\ge 0$  with  $n\ge 0$ cusps.
 Let   $\mathcal{A}(\mathbf{x},  S_{g,n})$ be the cluster algebra 
 coming from  a triangulation of the surface $S_{g,n}$   [Williams 2014]  \cite[Section 3.3]{Wil1}. 
 We shall denote by  $\mathbb{A}(S_{g,n})$  the corresponding cluster $C^*$-algebra. 

Let $T_{g,n}$ be the Teichm\"uller space of the surface $S_{g,n}$,
i.e. the set of all complex structures on $S_{g,n}$ endowed with the 
natural topology. The geodesic flow $T^t: T_{g,n}\to T_{g,n}$
is a one-parameter  group of matrices $\left(\small\begin{matrix} e^t &0\cr 0 &e^{-t}\end{matrix}\right)$
acting on the holomorphic quadratic differentials on the Riemann surface $S_{g,n}$. 
Such a flow gives rise to a one parameter group of automorphisms 
\begin{equation}\label{eq2.4}
\sigma_t: \mathbb{A}(S_{g,n})\to \mathbb{A}(S_{g,n})
\end{equation}
called the Tomita-Takesaki flow on the AF-algebra $\mathbb{A}(S_{g,n})$. 
Denote by $Prim~\mathbb{A}(S_{g,n})$ the space of all primitive ideals 
of $\mathbb{A}(S_{g,n})$ endowed with the Jacobson topology. 
Recall (\cite{Nik1}) that each primitive ideal has a parametrization by a vector 
$\Theta\in \mathbf{R}^{6g-7+2n}$ and we write it 
$I_{\Theta}\in Prim~\mathbb{A}(S_{g,n})$
\begin{theorem}\label{thm2.1}
{\bf (\cite{Nik1})}
There exists a homeomorphism
$h:  Prim~\mathbb{A}(S_{g,n})\times \mathbf{R}\to \{U\subseteq  T_{g,n} ~|~U~\hbox{{\sf is generic}}\}$
given by the formula $\sigma_t(I_{\Theta})\mapsto S_{g,n}$;  the set $U=T_{g,n}$ if and only if
$g=n=1$.   The $\sigma_t(I_{\Theta})$
is an ideal of  $\mathbb{A}(S_{g,n})$ for all $t\in \mathbf{R}$ and 
 the quotient  algebra  $\mathbb{A}(S_{g,n})/\sigma_t(I_{\Theta})$
is  a non-commutative coordinate ring  of  the Riemann surface  $S_{g,n}$.  
\end{theorem}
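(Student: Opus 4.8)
The plan is to assemble the homeomorphism from three classical correspondences and then to check that they combine into a single map with the stated properties. First I would parametrize the primitive ideals. Since $\mathbb{A}(S_{g,n})$ is an AF-algebra with $K_0(\mathbb{A}(S_{g,n}))\cong\mathcal{A}(\mathbf{x},S_{g,n})$, its ideal lattice is governed by the order-ideals of the dimension group (Bratteli; Effros--Handelman--Shen), and the primitive ideals form a distinguished family carrying natural coordinates. The cluster variables $\{x_i\}$ coming from the triangulation are exactly the Thurston shear coordinates attached to the arcs, of which an ideal triangulation of $S_{g,n}$ has $6g-6+3n$; the $n$ relations around the cusps cut these down to the $6g-6+2n$ coordinates of $T_{g,n}$, and a primitive ideal $I_{\Theta}$ records them only up to an overall positive scale, which leaves the projective parameter $\Theta\in\mathbf{R}^{6g-7+2n}$.

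Second, I would identify the extra $\mathbf{R}$-factor with the scale discarded by the projectivization. The geodesic flow $T^t$ acts on the holomorphic quadratic differentials by $\mathbf{diag}(e^t,e^{-t})$ and induces the Tomita--Takesaki flow $\sigma_t$ of (\ref{eq2.4}); concretely $\sigma_t$ rescales the shear coordinates and so transports $I_{\Theta}$ along precisely the fibre erased by the projectivization. The count $6g-7+2n+1=6g-6+2n=\dim_{\mathbf{R}}T_{g,n}$ then matches, and the assignment $\sigma_t(I_{\Theta})\mapsto S_{g,n}$ sends the pair $(\Theta,t)$ to the complex structure whose shear coordinates are the rescaled $\Theta$.

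Third, I would verify that this assignment is a homeomorphism onto the generic locus $U$. For a fixed triangulation the shear coordinates are real-analytic global coordinates on the open dense set of structures for which no arc degenerates, so bijectivity onto $U$ follows, the complement being exactly the non-generic structures removed from $T_{g,n}$. Bicontinuity then reduces to matching the Jacobson topology on $Prim~\mathbb{A}(S_{g,n})$ with the coordinate topology on Teichm\"uller space, which is controlled by the order structure of the dimension group. The case $g=n=1$ is treated separately: the once-punctured torus admits a single ideal triangulation (two triangles, three arcs) that already covers all of $T_{1,1}$ with no degenerate configurations to excise, whence $U=T_{1,1}$ and $\Theta$ reduces to the single real parameter $\theta$ of the noncommutative torus. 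Finally, the generators of $\mathbb{A}(S_{g,n})/\sigma_t(I_{\Theta})$ satisfy the shear commutation relations, so the quotient is a noncommutative coordinate ring of $S_{g,n}$ carrying the corresponding complex structure; for $g=n=1$ it is exactly the torus $\mathscr{A}_{\theta}$ of the Mundici description.

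The hard part will be the topological matching in the third step: controlling the Jacobson topology on the infinite-dimensional space $Prim~\mathbb{A}(S_{g,n})$ and proving that it coincides with the manifold topology on $T_{g,n}$ demands a careful use of the Bratteli diagram and the order on the dimension group, together with a sharp description of the genericity locus $U$.
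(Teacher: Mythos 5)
The first thing to note is that the paper does not actually prove this statement: Theorem \ref{thm2.1} is quoted verbatim from \cite{Nik1} and used in Section 3 as a black box, so there is no internal proof to measure your proposal against. Judged on its own terms, your skeleton is consistent with what the paper recalls: the count $6g-7+2n$ for the parameter $\Theta$ plus the one extra dimension of the Tomita--Takesaki flow does match $\dim_{\mathbf{R}}T_{g,n}=6g-6+2n$, and routing the ideal structure of the AF-algebra $\mathbb{A}(S_{g,n})$ through order ideals of the dimension group $\mathcal{A}(\mathbf{x},S_{g,n})$ is the standard Bratteli--Elliott mechanism one would expect the cited proof to use.

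That said, two steps of your argument do not hold up as written. First, your description of the genericity locus --- ``structures for which no arc degenerates'' --- cannot be the right dividing line: the cluster variables here are Penner's lambda lengths (cf.\ the Ptolemy relations invoked in the proof of Lemma \ref{lm3.1}), and for a \emph{fixed} ideal triangulation these are global real-analytic coordinates on the decorated Teichm\"uller space for \emph{every} $(g,n)$. Nothing degenerates, so your criterion would force $U=T_{g,n}$ in all cases and would not single out $g=n=1$. Your supporting claim that $S_{1,1}$ admits a single ideal triangulation is also false (there are infinitely many, permuted by flips and the mapping class group; what is special is only that they are all combinatorially equivalent). Whatever ``generic'' means in \cite{Nik1}, it must come from the operator-algebraic side --- for instance, which $\Theta$ actually yield \emph{primitive} (rather than merely closed two-sided) ideals, or which boundary points correspond to minimal measured foliations --- and that is exactly the content your proposal does not supply. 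Second, you explicitly defer the bicontinuity of $h$, i.e.\ the match between the Jacobson topology on $Prim~\mathbb{A}(S_{g,n})$ and the manifold topology on $T_{g,n}$; this is the genuinely hard analytic step, and flagging it honestly does not discharge it. As it stands the proposal is an outline whose two essential steps are missing.
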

Let $\phi\in Mod ~(S_{g,n})$ be a pseudo-Anosov automorphism of $S_{g,n}$ 
with the dilatation $\lambda_{\phi}>1$. If the Riemann surfaces $S_{g,n}$ and $\phi (S_{g,n})$
lie on the axis of $\phi$, then Theorem \ref{thm2.1} gives rise  the Connes invariant \cite[Section 4.2]{Nik1}:
\begin{equation}\label{eq2.5}
T(\mathbb{A}(S_{g,n}))=\{\log\lambda_{\phi} ~|~ \phi\in Mod ~(S_{g,n})\}. 
\end{equation}

\subsubsection{Mundici algebra $\mathbb{A}(S_{1,1})$}
The case $g=n=1$ was first studied in [Mundici 1988] \cite{Mun1}. 
One gets a parametrization of  $Prim~\mathbb{A}(S_{1,1})$ by $\theta\in\mathbf{R}/\mathbf{Z}$
and the quotient algebra:
\begin{equation}\label{eq2.6}
\mathbb{A}(S_{1,1})/\sigma_t(I_{\theta})\cong \sigma_t(\mathbb{A}_{\theta}),
\end{equation}
where $\mathbb{A}_{\theta}$ is the Effros-Shen algebra, such that 
$\mathscr{A}_{\theta}\subset \mathbb{A}_{\theta}$ \cite[Example 3.5.2 and Theorem 3.5.3]{N}. 
Moreover, each Anosov automorphism $\phi\in Mod ~(S_{1,1})$ is represented
by a matrix $A\in SL_2(\mathbf{Z})$, so that the Connes invariant (\ref{eq2.5}) 
can be written as $\log\lambda_A$, where $\lambda_A$ is the Perron-Frobenius eigenvalue
of $A$.

\subsection{Drinfeld modules}
The explicit class field theory for the function fields is strikingly simpler
than for the number fields. The generators of the maximal abelian unramified
extensions (i.e. the Hilbert class fields) are constructed using 
the concept of the Drinfeld module. Roughly speaking, such a module 
is an analog of the exponential function and a generalization of the Carlitz module. 
Nothing similar  exists at the number fields side, where  the explicit 
generators of abelian extensions are known only for the field of rationals
(Kronecker-Weber theorem)
and imaginary quadratic number fields (complex multiplication). 
Below we give some details
on the Drinfeld modules.

Let $k$ be a field.  A polynomial $f\in k[x]$ is said to be additive
in the ring $k[x,y]$ if $f(x+y)=f(x)+f(y)$. If $char ~k=p$, then it is verified 
directly that the polynomial $\tau(x)=x^p$ is additive. Moreover, each 
additive polynomial has the form $a_0x+a_1x^p+\dots+a_rx^{p^r}$. 
The set of all additive polynomials is closed under addition and composition 
operations. The corresponding ring is isomorphic to a ring $k\langle\tau_p\rangle$
of the non-commutative polynomials given by  the commutation relation:
\begin{equation}\label{eq2.7}
\tau_p a=a^p\tau_p, \qquad \forall a\in k. 
\end{equation}

Let $A=\mathbf{F}_q[T]$ and $k=\mathbf{F}_q(T)$. 
By the  Drinfeld module one understands a homomorphism
\begin{equation}\label{eq2.8}
\rho: A\to k\langle\tau_p\rangle,
\end{equation}
such that for all $a\in A$ the constant term of $\rho_a$ is $a$ and 
$\rho_a\not\in k$ for at least one $a\in A$.  
Consider a torsion module $\Lambda_{\rho}[a]=\{\lambda\in\bar k ~|~\rho_a(\lambda)=0\}$.
The following result describes the simplest case of the explicit class field theory for the function
fields.
\begin{theorem}\label{thm2.2}
For each non-zero $a\in A$ the function field $k\left(\Lambda_{\rho}[a]\right)$ 
is an abelian extension of $k$ whose Galois group is isomorphic to a subgroup
of $\left(A/(a)\right)^*$. 
\end{theorem}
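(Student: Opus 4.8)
The plan is to equip the torsion module $\Lambda_{\rho}[a]$ with the structure of a module over the finite ring $A/(a)$, to identify it with $A/(a)$ itself, and then to realize the Galois group as the group of $A/(a)$-linear automorphisms of this module. First I would turn $\Lambda_{\rho}[a]$ into an $A$-module: because every $\rho_b$ is an additive polynomial, the map $\lambda\mapsto\rho_b(\lambda)$ is additive, so the prescription $b\cdot\lambda:=\rho_b(\lambda)$ makes the abelian group $\Lambda_{\rho}[a]$ into an $A$-module, and since $\rho$ is a ring homomorphism, $\rho_a$ annihilates $\Lambda_{\rho}[a]$ by the very definition of the latter, whence $(a)$ lies in the annihilator and $\Lambda_{\rho}[a]$ is an $A/(a)$-module. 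At the same time $\Lambda_{\rho}[a]$ is precisely the root set of $\rho_a(x)\in k[x]$, whose formal derivative equals its linear coefficient $a\neq 0$ (all higher terms $x^{p^i}$ have derivative zero in characteristic $p$); hence $\rho_a(x)$ is separable and $k(\Lambda_{\rho}[a])$, being its splitting field, is a finite Galois extension of $k$.

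The second step is to pin down the module structure. Since $A=\mathbf{F}_q[T]$ is a principal ideal domain and $\Lambda_{\rho}[a]$ is a finite torsion $A$-module killed by $a$, the structure theorem gives $\Lambda_{\rho}[a]\cong\bigoplus_i A/(d_i)$ with each $d_i\mid a$. To collapse this to a single cyclic summand I would work one prime at a time: for a prime $\mathcal{P}\mid a$ the rank-$1$ hypothesis makes $\rho_{\mathcal{P}}(x)$ have $x$-degree $q^{\deg\mathcal{P}}=|\mathcal{P}|$, and separability then shows that $\Lambda_{\rho}[\mathcal{P}]$ is one-dimensional over the residue field $A/(\mathcal{P})$. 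This forces each $\mathcal{P}$-primary component to be cyclic, and assembling the components of the correct orders through the Chinese remainder theorem yields an isomorphism of $A/(a)$-modules $\Lambda_{\rho}[a]\cong A/(a)$.

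Finally I would build the representation of the Galois group. For $\sigma\in G:=\mathrm{Gal}\bigl(k(\Lambda_{\rho}[a])\mid k\bigr)$ and $\lambda\in\Lambda_{\rho}[a]$ the identity $\sigma(\rho_b(\lambda))=\rho_b(\sigma(\lambda))$ holds because the coefficients of $\rho_b$ lie in the base field $k$ and are fixed by $\sigma$; thus $\sigma$ permutes $\Lambda_{\rho}[a]$ and acts on it by $A/(a)$-linear automorphisms. The resulting homomorphism $G\to\mathrm{Aut}_{A/(a)}\bigl(\Lambda_{\rho}[a]\bigr)$ is injective, since $k(\Lambda_{\rho}[a])$ is generated over $k$ by $\Lambda_{\rho}[a]$ and an automorphism fixing every torsion point must be trivial. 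Under the isomorphism $\Lambda_{\rho}[a]\cong A/(a)$ one has $\mathrm{Aut}_{A/(a)}\bigl(A/(a)\bigr)\cong\left(A/(a)\right)^{*}$, so $G$ embeds as a subgroup of the abelian group $\left(A/(a)\right)^{*}$; in particular the extension $k(\Lambda_{\rho}[a])\mid k$ is abelian, as asserted.

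The step I expect to be the main obstacle is the identification $\Lambda_{\rho}[a]\cong A/(a)$. Separability already delivers the correct cardinality $\#\Lambda_{\rho}[a]=q^{\deg a}=\#\left(A/(a)\right)$, but upgrading this to the module isomorphism — equivalently, to cyclicity over $A/(a)$ — is exactly where the rank-$1$ hypothesis enters, through the degree $q^{\deg\mathcal{P}}$ of $\rho_{\mathcal{P}}$. For ranks $r\geq 2$ the same reasoning instead produces a free module $\left(A/(a)\right)^{r}$ and an embedding of $G$ into the non-abelian group $\mathrm{GL}_r\left(A/(a)\right)$, so the abelian conclusion is special to the rank-$1$ case considered here.
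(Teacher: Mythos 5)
Your argument is correct. Note that the paper does not prove this statement at all: Theorem \ref{thm2.2} is recalled in the Preliminaries as background, quoted from [Rosen 2002, Proposition 12.5]. What you have written is precisely the standard proof from that source — separability of $\rho_a(x)$ via the derivative $a\neq 0$, the identification $\Lambda_{\rho}[a]\cong A/(a)$ using the rank-$1$ degree count $\deg_x\rho_{\mathcal{P}}=q^{\deg\mathcal{P}}$ together with the structure theorem and CRT, and the faithful $A/(a)$-linear Galois action giving $G\hookrightarrow\left(A/(a)\right)^{*}$ — so it supplies exactly the justification the paper outsources to the reference, and your closing remark about rank $r\geq 2$ yielding $\mathrm{GL}_r\left(A/(a)\right)$ correctly locates where the rank-$1$ hypothesis is used.
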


\section{Proof}
\subsection{Proof of theorem \ref{thm1.1}}
For the sake of clarity, let us outline the main ideas.  
We use a similarity (\ref{eq1.1}) and (\ref{eq1.4}) 
between the ring $k\langle\tau_p\rangle$  and the Mundici algebra $\mathbb{A}(S_{1,1})$;
the primitive ideals of $\mathbb{A}(S_{1,1})$ define the noncommutative tori $\mathscr{A}_{\theta}$,
see (\ref{eq2.6}).  Specifically, we construct a self-adjoint representation  of the ring 
$k\langle\tau_p\rangle$ by bounded linear operators on a Hilbert space $\mathcal{H}$,
and compare it with the $C^*$-algebra  $\mathbb{A}(S_{1,1})$. 
An elegant proof of isomorphism (i) of Theorem \ref{thm1.1} is given by 
the K-theory of algebra  $\mathbb{A}(S_{1,1})$ which is essentially an additive
subgroup of the Laurent polynomials $\mathbf{Z}[s^{\pm 1}, t^{\pm 1}]$. 
The remaining items (ii)-(iv) of Theorem \ref{thm1.1} follow from (i) 
and the standard properties of the Mundici algebra  $\mathbb{A}(S_{1,1})$.
Let us pass to a detailed argument.

\bigskip
(i) Let $\mathcal{A}(S_{1,1})\subset\mathbf{Z}[x_1^{\pm 1}, x_2^{\pm 1}, x_3^{\pm 1}]$
be the cluster algebra of rank $3$ associated to the ideal triangulation of a torus with one cusp \cite[Example 1]{Nik1}. 
Let $p$ be a prime number, and denote by  $\mathcal{A}_p(S_{1,1})$ a sub-algebra of  $\mathcal{A}(S_{1,1})$
consisting of the Laurent polynomials whose coefficients are divisible by $p$. It is easy to verify that   $\mathcal{A}_p(S_{1,1})$
is again a dimension group under the addition of the Laurent polynomials. We say that  $\mathbb{A}_p(S_{1,1})$ is a
congruence sub-algebra of level $p$ of the Mundici algebra  $\mathbb{A}(S_{1,1})$, i.e. 
$K_0(\mathbb{A}_p(S_{1,1}))\cong \mathcal{A}_p(S_{1,1})$. We shall split the proof of item (i) of Theorem \ref{thm1.1}
in two lemmas. 
\begin{lemma}\label{lm3.1}
 $C^*(k \langle\tau_p\rangle)\cong \mathbb{A}_p(S_{1,1})$
\end{lemma}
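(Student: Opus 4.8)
The plan is to exhibit $C^*(k\langle\tau_p\rangle)$ as an AF-algebra and then to identify its dimension group with $\mathcal{A}_p(S_{1,1})$, after which the lemma follows from the fact that the dimension group is a complete invariant of an AF-algebra \cite[Theorem 3.5.2]{N}. First I would work inside Li's framework \cite{Li1}: the multiplicative semigroup $S$ generated by $\tau_p$ and the $a_i\in A$ subject to $\tau_p a_i=a_i^p\tau_p$ is left cancelative by hypothesis, and the commutation relation (\ref{eq2.7}) provides a normal form in which every element of $S$ is written as $a\,\tau_p^{\,n}$ with $a\in A$ and $n\ge 0$ (all occurrences of $\tau_p$ are pushed to the right, each one raising the accompanying scalar to its $p$-th power). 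Realizing $S$ by the regular representation on $\ell^2(S)$ yields the self-adjoint representation by bounded operators on a Hilbert space alluded to in the proof outline, and its weak closure is $C^*(k\langle\tau_p\rangle)$. Filtering by the exponent $n$ and by $\deg a$ then presents this algebra as the norm closure of an ascending chain of finite-dimensional $C^*$-subalgebras, i.e.\ as an AF-algebra with an explicit Bratteli diagram.

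The second step is to read off the partial multiplicity matrices of this Bratteli diagram and thereby the dimension group. Here the characteristic-$p$ Frobenius is decisive: passing one copy of $\tau_p$ across a scalar replaces $a$ by $a^p$, and the freshman's dream $(a+b)^p=a^p+b^p$ valid in $\mathbf{F}_q[T]$, together with the analog of Fermat's Little Theorem recalled in (\ref{eq1.3})--(\ref{eq1.4}), records each transition in the diagram with a factor $p$. I expect this to translate, at the level of $K_0$, into the statement that every class is represented by a Laurent polynomial whose integer coefficients are divisible by $p$, so that $K_0\bigl(C^*(k\langle\tau_p\rangle)\bigr)$ is order-isomorphic to the level-$p$ congruence sub-dimension group $\mathcal{A}_p(S_{1,1})$.

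To make this identification precise I would match generators with the cluster data of the once-punctured torus. The three cluster variables $x_1,x_2,x_3$ attached to the arcs of the ideal triangulation of $S_{1,1}$ \cite[Example 1]{Nik1} correspond to $\tau_p$ and the scalar generators $a_i$, while the exchange relations (\ref{eq2.3}) correspond to the composition relations in $k\langle\tau_p\rangle$. The Laurent phenomenon \cite[Theorem 2.27]{Wil1} guarantees that each cluster variable lies in $\mathbf{Z}[x_1^{\pm1},x_2^{\pm1},x_3^{\pm1}]$, and the $p$-multiplicity of the previous paragraph places its image in $\mathcal{A}_p(S_{1,1})$. Since on both sides the positive cone $K_0^+$ consists precisely of the Laurent polynomials with nonnegative coefficients, this is an order-isomorphism of dimension groups, and hence $C^*(k\langle\tau_p\rangle)\cong\mathbb{A}_p(S_{1,1})$.

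The main obstacle will be the second step: rigorously showing that the Frobenius relation $\tau_p a=a^p\tau_p$ produces \emph{exactly} the multiplicity $p$ in the Bratteli diagram, so that the level-$p$ congruence sub-dimension group appears and not some other subgroup. This demands careful bookkeeping of how the reduction (\ref{eq1.4}) modulo $\mathcal{P}$ interacts with the partial-multiplicity matrices, and it is the point at which the formal analogy between (\ref{eq1.2}) and the noncommutative-torus relation (\ref{eq1.1}) must be converted into an exact statement about $K_0$.
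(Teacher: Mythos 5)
Your proposal has a genuine gap at precisely the point you flag yourself: the identification of $K_0\bigl(C^*(k\langle\tau_p\rangle)\bigr)$ with the level-$p$ congruence sub-dimension group is never carried out, only conjectured (``I expect this to translate\dots''). Nothing in the normal form $a\,\tau_p^{\,n}$ by itself forces the partial multiplicities of a Bratteli diagram to be divisible by $p$, and the proposed dictionary in your third paragraph cannot be taken literally: the exchange relations (\ref{eq2.3}) are commutative identities among Laurent polynomials involving \emph{sums} of monomials, while the relations in $k\langle\tau_p\rangle$ are noncommutative Frobenius twists, so there is no ring homomorphism matching the three initial cluster variables with $\tau_p$ and the scalars $a_i$. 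A second, unacknowledged problem sits in your first step: in the regular representation on $\ell^2(S)$ the generator $\tau_p$ acts as a proper isometry, so the $C^*$-subalgebras obtained by bounding the exponent $n$ are not finite-dimensional (the subalgebra generated by $\tau_p$ alone is already Toeplitz-like, and a proper isometry obstructs approximation by finite-dimensional algebras). Hence the claim that filtering by $n$ and $\deg a$ exhibits $C^*(k\langle\tau_p\rangle)$ as an AF-algebra does not follow as stated; also, the \emph{weak} closure of the regular representation would be a von Neumann algebra, not the semigroup $C^*$-algebra.

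For comparison, the paper takes a different route around the same difficulty: it never constructs a Bratteli diagram for $C^*(k\langle\tau_p\rangle)$. Instead it identifies $\mathcal{A}_p(S_{1,1})=K_0(\mathbb{A}_p(S_{1,1}))$ with the completion $c_\infty$ of $k$ at infinity (by viewing elements of $\mathcal{A}_p(S_{1,1})$ as Laurent polynomials in $s=x_1/x_3$, $t=x_2/x_3$, reducing the $p$-divisible coefficients modulo $p$ to land in $\mathbf{F}_p[s^{\pm1}]$, and passing to formal Laurent series), and separately computes $K_0$ of the masa $C(a_1,a_2,\dots)\subset C^*(k\langle\tau_p\rangle)$ as $K_0(C(X))$ for $X$ a Cantor set to conclude that $K_0(C^*(k\langle\tau_p\rangle))\cong c_\infty$ as well; the lemma then follows from the completeness of the dimension-group invariant. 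If you wish to salvage your approach, the work to be done is exactly the bookkeeping you defer: either produce the Bratteli diagram explicitly and verify the multiplicity-$p$ claim, or replace it by an identification of both $K_0$-groups with a common intermediate object, which is what the paper does.
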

\begin{proof}
(i) First notice, that elements of the cluster algebra $\mathcal{A}(S_{1,1})\subset\mathbf{Z}[x_1^{\pm 1}, x_2^{\pm 1}, x_3^{\pm 1}]$
are the Laurent polynomials depending on the two variables $s=\frac{x_1}{x_3}$ and $t=\frac{x_2}{x_3}$. 
Indeed, the Ptolemy relations for the Penner coordinates $x_i=\lambda(\gamma_i)$ on the Teichm\"uller space $T_{1,1}$
are homogeneous, see  [Williams 2014]  \cite[Proposition 3.10]{Wil1}. We divide such relation by any non-zero $x_i$,
say, $x_i=x_3$.  Thus one gets an inclusion of the rings:
\begin{equation}\label{eq3.1}
\mathcal{A}_p(S_{1,1})\subset \mathcal{A}(S_{1,1})\subset\mathbf{Z}[s^{\pm 1}, t^{\pm 1}].
\end{equation}

\medskip
(ii) If we assume $t=Const$ in (\ref{eq3.1}), then each element of  $\mathcal{A}_p(S_{1,1})$
is given by a Laurent polynomial in one variable $\mathbf{Z}[s^{\pm 1}]$ whose coefficients 
are divisible by $p$. 
Moreover, the map $p\mathbf{Z}\mapsto \mathbf{Z}\mod p$ defines an inclusion 
of the rings: 
\begin{equation}\label{eq3.2}
\mathcal{A}_p(S_{1,1})\subset\mathbf{F}_p[s^{\pm 1}],
\end{equation}
where $\mathbf{F}_p[s^{\pm 1}]$ is the ring of Laurent polynomials 
over the finite field $\mathbf{F}_p$. 

\medskip
(iii) On the other hand, the formal Laurent series $\mathbf{F}_p\left(\left(\frac{1}{s}\right)\right)$
are known to define a completion $k_{\infty}$ of the function field $k$ at infinity, see 
e.g.   [Rosen 2002] \cite[p. 210]{R}.  Therefore (\ref{eq3.2}) gives rise to an isomorphism 
$\mathcal{A}_p(S_{1,1})\cong k_{\infty}$, where  $k_{\infty}$ is an analog of the real line 
$\mathbf{R}$ for the field $k$. 
Moreover, to get an analog $c_{\infty}$ of the complex plane $\mathbf{C}=\mathbf{R}+i\mathbf{R}$
for $k$,
one needs to drop the restriction $t=Const$ imposed on (\ref{eq3.1}) in item (ii). In other words,
one gets an isomorphism: 
\begin{equation}\label{eq3.3}
\mathcal{A}_p(S_{1,1})\cong c_{\infty}. 
\end{equation}

\medskip
(iv)
Recall that 
 $K_0(\mathbb{A}_p(S_{1,1})) \cong \mathcal{A}_p(S_{1,1})$
and the AF-algebra $\mathbb{A}_p(S_{1,1})$ is uniquely defined by its K-theory. Therefore to finish the proof
 of Lemma \ref{lm3.1}, one needs to calculate the group   $K_0(C^*(k \langle\tau_p\rangle))$.

\medskip
(v) 
It can be done as follows. Recall that the ring  $k\langle\tau_p\rangle$ is generated by the elements $a_i\in k$
and $\tau_p$ satisfying relations (\ref{eq1.2}) for all $a_i\in k$.  Let us denote by $C(a_1, a_2, \dots)$ a maximal 
abelian subalgebra (masa) of the $k \langle\tau_p\rangle$ generated by all elements $a_i\in k$. 
 The masa  $C(a_1, a_2, \dots)$ is an AF-algebra. Indeed, one can write
$C(a_1, a_2, \dots)=\lim_{n\to\infty} C(a_1,  \dots, a_n)$, where $C(a_1, \dots, a_n)$
is a finite-dimensional $C^*$-algebra generated by the elements $a_1, \dots, a_n\in k$. 
Thus $C(a_1, a_2, \dots)$ is an AF-algebra, see Section 2.1.4. 

\medskip
(vi)  On the other hand, each commutative $C^*$-algebra is isomorphic 
to $C(X)$, where $X$ is a Hausdorff topological space and $C(X)$ is the 
$C^*$-algebra of complex valued functions on $X$. Moreover, since 
$C(a_1, a_2, \dots)$ is an AF-algebra, the space $X$ must be a Cantor set. 
 Thus one gets the following isomorphisms between the abelian groups:
\begin{equation}\label{eq3.4}
K_0(C(a_1, a_2, \dots))\cong K_0(C(X))\cong \prod_{n\in\mathbf{N}} \mathbf{Z}_{(n)},
\end{equation}
where $X$ is the Cantor set and the last term is the cartesian product
of infinite number of copies of $\mathbf{Z}$.  
In particular,  it follows from (\ref{eq3.4}) that 
$K_0(C(a_1, a_2, \dots))$ is an uncountable abelian group.

\medskip
(vii)  Since  $C(a_1, a_2, \dots)\subset C^*(k \langle\tau_p\rangle)$, we conclude that
\begin{equation}\label{eq3.5}
K_0(C(a_1, a_2, \dots))\subset K_0(C^*(k \langle\tau_p\rangle)).
\end{equation}
In particular, the  $K_0(C^*(k \langle\tau_p\rangle))$ is an uncountable
abelian group.  Specifically, since $C(a_1, a_2, \dots)$ is generated by elements of a countable 
field $k$, it transpires that:   
\begin{equation}\label{eq3.6}
K_0(C^*(k \langle\tau_p\rangle))\cong c_{\infty},
\end{equation}
where $c_{\infty}$ is the algebraic completion at infinity of the function field $k$. 

\medskip
(viii) To finish the proof of Lemma \ref{lm3.1},  it remains to compare (\ref{eq3.3}) with  (\ref{eq3.6}).
Since  the $K_0$-groups define the underlying AF-algebras up to an isomorphism (Section 2.1.4),
we conclude that   $C^*(k \langle\tau_p\rangle)\cong \mathbb{A}_p(S_{1,1})$.
Lemma \ref{lm3.1} is proved.
\end{proof}

\begin{lemma}\label{lm3.2}
 $\bigoplus_p C^*(k \langle\tau_p\rangle)\cong \mathbb{A}(S_{1,1})$.
 \end{lemma}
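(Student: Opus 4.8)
The plan is to deduce Lemma \ref{lm3.2} from Lemma \ref{lm3.1} by summing the isomorphisms $C^*(k\langle\tau_p\rangle)\cong\mathbb{A}_p(S_{1,1})$ over all primes $p$ and identifying the direct sum of the congruence sub-algebras with the full Mundici algebra. First I would observe that the direct sum $\bigoplus_p C^*(k\langle\tau_p\rangle)$ is itself an AF-algebra, being a direct sum of AF-algebras, so by the classification recalled in Section 2.1.4 it suffices to compute its $K_0$-group with its order structure and match it against the dimension group of $\mathbb{A}(S_{1,1})$. Since $K_0$ commutes with direct sums, Lemma \ref{lm3.1} gives
\begin{equation}\label{eq3.7plan}
K_0\Bigl(\bigoplus_p C^*(k\langle\tau_p\rangle)\Bigr)\cong\bigoplus_p K_0\bigl(\mathbb{A}_p(S_{1,1})\bigr)\cong\bigoplus_p\mathcal{A}_p(S_{1,1}),
\end{equation}
so everything reduces to a statement purely about the cluster algebra, namely that $\bigoplus_p\mathcal{A}_p(S_{1,1})\cong\mathcal{A}(S_{1,1})$ as dimension groups.

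Next I would establish this cluster-algebra identity at the level of the underlying additive semigroups of Laurent polynomials. Recall from (\ref{eq3.1}) that $\mathcal{A}_p(S_{1,1})$ consists of the Laurent polynomials in $\mathcal{A}(S_{1,1})\subset\mathbf{Z}[s^{\pm1},t^{\pm1}]$ whose coefficients are divisible by $p$. The key arithmetic input is that every nonzero integer factors into primes, so any Laurent polynomial in $\mathcal{A}(S_{1,1})$ decomposes coefficient-by-coefficient into contributions associated to the distinct prime divisors of its coefficients; I would make this precise by writing an integer coefficient $m$ via its prime factorization and distributing the monomials accordingly, so that a general element of $\mathcal{A}(S_{1,1})$ is recovered as a finite sum of elements drawn one from each $\mathcal{A}_p(S_{1,1})$. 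This exhibits the natural summation map $\bigoplus_p\mathcal{A}_p(S_{1,1})\to\mathcal{A}(S_{1,1})$ as surjective, and I would check it is injective and order-preserving by comparing supports and positive cones, yielding the required order-isomorphism of dimension groups. Combining with (\ref{eq3.7plan}) and the uniqueness of AF-algebras from their ordered $K_0$ completes the proof.

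The main obstacle I anticipate is the identity $\bigoplus_p\mathcal{A}_p(S_{1,1})\cong\mathcal{A}(S_{1,1})$, which is delicate precisely because the direct sum is a coproduct of countably many groups (only finitely many coordinates nonzero) while the decomposition of a fixed Laurent polynomial by the primes dividing its coefficients must be reconciled with the semigroup structure and the positivity constraints that make these objects dimension groups rather than mere abelian groups. In particular I would need to verify that the positive cone of $\mathcal{A}(S_{1,1})$ matches the cone induced from the summands, which is where the order-isomorphism — as opposed to a bare group isomorphism — has real content; this is essential because, as emphasized in Section 2.1.4, it is the ordered $K_0$ and not merely the abstract $K_0$ that is a complete invariant. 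A secondary subtlety is ensuring the prime-by-prime decomposition respects the Laurent phenomenon, i.e. that the pieces genuinely lie in the cluster sub-algebras $\mathcal{A}_p(S_{1,1})$ and not merely in the ambient ring $\mathbf{Z}[s^{\pm1},t^{\pm1}]$, which I would handle by noting that divisibility of coefficients is preserved under the additive structure used to generate these sub-algebras.
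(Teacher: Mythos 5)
Your overall strategy coincides with the paper's: both reduce Lemma \ref{lm3.2} to Lemma \ref{lm3.1} plus the claim that $\mathbb{A}(S_{1,1})\cong\bigoplus_p\mathbb{A}_p(S_{1,1})$, both pass to $K_0$ using that $K_0$ commutes with direct sums, and both reduce everything to the identity $\bigoplus_p\mathcal{A}_p(S_{1,1})\cong\mathcal{A}(S_{1,1})$ of dimension groups, invoking the completeness of the ordered $K_0$ invariant for AF-algebras. The divergence is only in how that last identity is argued, and there your proposal contains a step that fails. The summation map $\bigoplus_p\mathcal{A}_p(S_{1,1})\to\mathcal{A}(S_{1,1})$ is not injective: since $\mathcal{A}_p(S_{1,1})=p\,\mathcal{A}(S_{1,1})$, the summands pairwise intersect nontrivially (indeed $pq\,\mathcal{A}(S_{1,1})\subset\mathcal{A}_p(S_{1,1})\cap\mathcal{A}_q(S_{1,1})$), so for any nonzero $f\in\mathcal{A}(S_{1,1})$ the element of the coproduct with $6f$ in the $p=2$ coordinate, $-6f$ in the $p=3$ coordinate and $0$ elsewhere lies in the kernel. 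Thus the internal sum $\sum_p\mathcal{A}_p(S_{1,1})=\mathcal{A}(S_{1,1})$, which your coefficient-wise prime decomposition does establish (surjectivity), is not a direct sum, and the ``check it is injective'' step you flag cannot be carried out for this map. You correctly identified this as the main obstacle, but the proposed resolution does not close it.

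The paper takes a different tack at exactly this point: it observes that $\mathcal{A}_p(S_{1,1})=p\,\mathcal{A}(S_{1,1})$ is a principal order-ideal and that multiplication by $p$ acts as a projection on the space of Laurent polynomials, and from this asserts (\ref{eq3.9}) directly, without exhibiting the isomorphism as a summation map. Whatever one thinks of that argument, it at least does not rest on injectivity of a map that visibly has a kernel. To repair your version you would need either to replace the summation map by an order-isomorphism $\bigoplus_p\mathcal{A}_p(S_{1,1})\to\mathcal{A}(S_{1,1})$ constructed abstractly (each $\mathcal{A}_p(S_{1,1})$ is order-isomorphic to $\mathcal{A}(S_{1,1})$ via division by $p$, so the real content is that a countable direct sum of copies of this dimension group is order-isomorphic to a single copy), or to adopt the paper's order-ideal argument. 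As written, the proposal does not prove the lemma.
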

\begin{proof}
(i) It follows from Lemma \ref{lm3.1} that
\begin{equation}\label{eq3.7}
\bigoplus_p ~C^*(k \langle\tau_p\rangle)\cong \bigoplus_p ~\mathbb{A}_p(S_{1,1}). 
\end{equation}
Thus to prove Lemma \ref{lm3.2},  we need to demonstrate that the Mundici algebra    $\mathbb{A}(S_{1,1})$
is isomorphic to the direct sum $\bigoplus_p ~\mathbb{A}_p(S_{1,1})$.

\medskip
(ii) To prove the isomorphism $\mathbb{A}(S_{1,1})\cong \bigoplus_p ~\mathbb{A}_p(S_{1,1})$,
let us calculate the K-theory of the direct sum  [Blackadar 1986] \cite[Section 5.2.3]{B}:
\begin{equation}\label{eq3.8}
K_0 \left(\bigoplus_p ~\mathbb{A}_p(S_{1,1}) \right)\cong \bigoplus_p K_0(\mathbb{A}_p(S_{1,1}))
\cong \bigoplus_p \mathcal{A}_p(S_{1,1}). 
\end{equation}

\medskip
(iii) It is easy to see, that $\mathcal{A}_p(S_{1,1})=p \mathcal{A}(S_{1,1})$ is the principal order-ideal of the
cluster algebra  $\mathcal{A}(S_{1,1})$,  because all coefficients of the Laurent series corresponding to the cluster algebra $\mathcal{A}_p(S_{1,1})$ 
are divisible by $p$. Thus the linear operator $\pi$  acting on  a vector space of the Laurent polynomials by the formula
$\mathcal{A}(S_{1,1})\mapsto p \mathcal{A}(S_{1,1})$  is a projection, i.e. $\pi^2=\pi$.  The latter follows from the inclusion of the 
linear subspaces $p^2 \mathcal{A}(S_{1,1})\subset  p \mathcal{A}(S_{1,1})$. 

\medskip
(iv) Therefore in formula (\ref{eq3.8})  the direct sum of  linear subspaces  $\mathcal{A}_p(S_{1,1})$  in formula (\ref{eq3.8}) 
gives us the linear space $\mathcal{A}(S_{1,1})$, i.e.
\begin{equation}\label{eq3.9}
\bigoplus_p \mathcal{A}_p(S_{1,1}) = \mathcal{A}(S_{1,1}). 
\end{equation}

\medskip
(v) It remains to recall, that  $\mathcal{A}(S_{1,1})=K_0( \mathbb{A}(S_{1,1}))$
and  $\mathcal{A}_p(S_{1,1})=K_0( \mathbb{A}_p(S_{1,1}))$ for all primes $p$. 
Since the $K_0$-groups define the AF-algebras up to an isomorphism, we conclude that 
 $\mathbb{A}(S_{1,1})\cong \bigoplus_p ~\mathbb{A}_p(S_{1,1})$. 
 This argument finishes the proof of Lemma \ref{lm3.2}. 
\end{proof}

\medskip
Item (i) of Theorem \ref{thm1.1} follows from Lemma \ref{lm3.2}.

\bigskip
(ii) We pass  to the proof of item (ii) of  Theorem \ref{thm1.1}
by splitting it in two lemmas. 
\begin{lemma}\label{lm3.3}
 $
 Spec~(A)\subset\mathbf{R}/\mathbf{Z}.$
\end{lemma}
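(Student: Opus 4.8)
The plan is to place $Spec~(A)$ inside the parameter space $\mathbf{R}/\mathbf{Z}$ of primitive ideals of the Mundici algebra, using item (i) of Theorem \ref{thm1.1} as the bridge. By item (i) we have $\mathbb{A}(S_{1,1})\cong\bigoplus_p C^*(k\langle\tau_p\rangle)$, and by the Mundici parametrization (\ref{eq2.6}) the primitive ideals $I_\theta$ of $\mathbb{A}(S_{1,1})$ stand in bijection with $\theta\in\mathbf{R}/\mathbf{Z}$; recall also that $\mathscr{A}_\theta=\mathbb{A}(S_{1,1})/I_\theta$. It therefore suffices to attach to every prime ideal of $A$ a primitive ideal of $\bigoplus_p C^*(k\langle\tau_p\rangle)$ and to verify that distinct primes receive distinct ideals.

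To construct the assignment, I would start from a non-zero prime $(\mathcal{P})$, generated by an irreducible polynomial $\mathcal{P}$ of degree $d$, and reduce the Drinfeld commutation relation (\ref{eq1.3}) modulo $\mathcal{P}$. Invoking the analogue of Fermat's Little Theorem puts this relation into the noncommutative-torus form (\ref{eq1.4}), i.e. into the shape (\ref{eq1.1}), over the residue field $A/(\mathcal{P})\cong\mathbf{F}_{q^d}$. Taking the discrete logarithm ${1\over 2\pi i}\log(\cdot)$ of the image of $T$ in the cyclic group $\mathbf{F}_{q^d}^*$ then produces a well-defined $\theta_{\mathcal{P}}\in{1\over q^d-1}\mathbf{Z}/\mathbf{Z}\subset\mathbf{R}/\mathbf{Z}$, hence a primitive ideal $I_{\theta_{\mathcal{P}}}$ through (\ref{eq2.6}). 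The generic point $(0)$ is accommodated separately, e.g. by the commutative torus $\mathscr{A}_{\mathbf{Q}}$ at a reserved value of $\theta$. This yields a set map $Spec~(A)\to\mathbf{R}/\mathbf{Z}$.

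The principal obstacle is injectivity. Because every irreducible polynomial of a fixed degree $d$ shares the residue field $\mathbf{F}_{q^d}$, the coarse quantity $|\mathcal{P}|=q^d$ cannot separate same-degree primes, and the separation must be drawn from the finer datum carried by $\theta_{\mathcal{P}}$. I would supply it by fixing a single copy of $\mathbf{F}_{q^d}$ together with a primitive root, and using that $\mathcal{P}$ is exactly the minimal polynomial of $T\bmod\mathcal{P}$, whose Frobenius orbit is disjoint from the orbits attached to the remaining degree-$d$ irreducibles; selecting the orbit representative of least discrete logarithm then distinguishes $\mathcal{P}$ from its competitors. Arranging these choices coherently across all degrees --- and reconciling the reserved value assigned to $(0)$ --- so that $\mathcal{P}\mapsto\theta_{\mathcal{P}}$ is globally injective into $\mathbf{R}/\mathbf{Z}$, is the step I expect to demand the most care and forms the crux of the argument.
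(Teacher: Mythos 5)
Your route is genuinely different from the paper's, and it is worth recording the difference before the gap. The paper never constructs an explicit angle $\theta_{\mathcal{P}}$: it composes the Drinfeld homomorphism $\rho\colon A\to k\langle\tau_p\rangle$ with the representation $\mathfrak{r}$ obtained in Lemma \ref{lm3.1} to get a ring homomorphism $\mathfrak{r}\circ\rho\colon A\to\mathbb{A}_p(S_{1,1})\subset\mathbb{A}(S_{1,1})$, then passes to primitive ideal spaces, invoking functoriality of $Prim$ to obtain $Prim~(A)\subset Prim~\mathbb{A}(S_{1,1})\cong\mathbf{R}/\mathbf{Z}$, and finally uses $Prim~(A)\cong Spec~(A)$ for the commutative ring $A$. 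Your proposal instead tries to realize the inclusion arithmetically, prime by prime, through the discrete logarithm; this is closer in spirit to the heuristic discussion around (\ref{eq1.3})--(\ref{eq1.4}) in the Introduction than to the proof of Lemma \ref{lm3.3}. You also route the argument through item (i) of Theorem \ref{thm1.1} (the direct-sum decomposition of Lemma \ref{lm3.2}), whereas the paper only needs Lemma \ref{lm3.1} at this point.

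The proposal as written has a genuine gap, and you name it yourself: injectivity of $\mathcal{P}\mapsto\theta_{\mathcal{P}}$ is the entire content of the assertion $Spec~(A)\subset\mathbf{R}/\mathbf{Z}$, and you leave it as the step that ``forms the crux of the argument.'' Two concrete obstructions remain. First, the multiplier actually appearing in the reduced relation (\ref{eq1.4}) is $1\bmod\mathcal{P}$ for \emph{every} prime $\mathcal{P}$, so its discrete logarithm is identically $0$; to get a nonconstant map you must switch to the discrete logarithm of $T\bmod\mathcal{P}$, a datum that is not part of relation (\ref{eq1.4}) and that depends on a choice of primitive root of $\mathbf{F}_{q^d}^*$ and is defined only up to the Frobenius orbit. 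Your least-representative convention does repair well-definedness and separates same-degree primes (disjoint orbits have distinct minima), but it does not rule out collisions across degrees: the subgroups $\frac{1}{q^d-1}\mathbf{Z}/\mathbf{Z}$ for varying $d$ intersect in $\frac{1}{q^{\gcd(d,d')}-1}\mathbf{Z}/\mathbf{Z}$, so a degree-$d$ and a degree-$d'$ prime can a priori receive the same value of $\theta$. (The prime $(T)$, for which $T\bmod T=0$ has no discrete logarithm, and the generic point $(0)$ also need the separate treatment you defer.) Until these points are settled the map is not shown to be injective, so the claimed inclusion is not established.
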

\begin{proof}
(i) Let  $Drin_A(k)$  be the Drinfeld module defined by a homomorphism
\begin{equation}\label{eq3.10}
\rho: A\to k\langle\tau_p\rangle. 
\end{equation}
It follows from Lemma \ref{lm3.1} that  representation 
$\mathfrak{r}:  ~k\langle\tau_p\rangle \longrightarrow \mathscr{B}(\mathcal{H})$
composed with $\rho$ gives us a homomorphism
\begin{equation}\label{eq3.11}
\mathfrak{r}\circ\rho: ~A\to \mathbb{A}_p(S_{1,1}) \subset \mathbb{A}(S_{1,1}),
\end{equation}
where $\mathbb{A}_p(S_{1,1})$ is a congruence sub-algebra of level $p$
of the Mundici algebra $\mathbb{A}(S_{1,1})$. 

\medskip
(ii)  Consider the primitive spectra of rings in (\ref{eq3.11}):
\begin{equation}\label{eq3.12}
Prim~(A)\subset Prim   ~\mathbb{A}(S_{1,1})\cong \mathbf{R}/\mathbf{Z},
\end{equation}
where one gets the inclusion of spectra since $Prim$ is a functor, and the isomorphism 
$\mathbb{A}(S_{1,1})\cong \mathbf{R}/\mathbf{Z}$ was described in Section 2.2.3.

\medskip
(iii) Recall that $A$ is a commutative ring. For such rings  $Prim~(A)\cong Spec~(A)$,
where $Spec~(A)$ is the prime spectrum of $A$. Therefore one gets  from (\ref{eq3.12}) the
inclusion: 
\begin{equation}\label{eq3.13}
Spec~(A)\subset \mathbf{R}/\mathbf{Z}.
\end{equation}
Lemma \ref{lm3.3} is proved.  
\end{proof}

\begin{lemma}\label{lm3.4}
 $Spec~(A) \subset\overline{\mathbf{Q}}/\mathbf{Z}$. 
\end{lemma}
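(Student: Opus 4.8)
\textbf{Proof proposal for Lemma \ref{lm3.4}.}

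The plan is to refine the inclusion $Spec~(A)\subset\mathbf{R}/\mathbf{Z}$ obtained in Lemma \ref{lm3.3} by showing that the image of every prime ideal of $A$ under the composed homomorphism $\mathfrak{r}\circ\rho$ is in fact an \emph{algebraic} point of the unit interval. The starting observation is that each prime $\mathcal{P}\in Spec~(A)$ corresponds to an irreducible polynomial in $A=\mathbf{F}_q[T]$, and via the commutation relation (\ref{eq1.4}) the associated primitive ideal $I_{\theta}\in Prim~\mathbb{A}(S_{1,1})$ carries the parameter $e^{2\pi i\theta}=(1\mod\mathcal{P})$. Thus I would first make precise the identification already sketched in the introduction: the parameter $\theta$ attached to $\mathcal{P}$ is, up to the factor $\tfrac{1}{2\pi i}$, the discrete logarithm $\log(1\mod\mathcal{P})$ taken in the residue field $A/(\mathcal{P})\cong\mathbf{F}_{q^{\deg\mathcal{P}}}$.

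The key step is then to argue that such a discrete logarithm is an algebraic number rather than an arbitrary real in $\mathbf{R}/\mathbf{Z}$. Here I would exploit the stationary structure of the Effros-Shen algebra $\mathbb{A}_{\theta}$ sitting inside $\mathbb{A}(S_{1,1})$: by Section 2.2.3 the primitive ideal parameter $\theta$ associated to a pseudo-Anosov (Anosov) automorphism $\phi\in Mod~(S_{1,1})$ is governed by the Perron-Frobenius eigenvalue $\lambda_A$ of a matrix $A\in SL_2(\mathbf{Z})$, and the Connes invariant (\ref{eq2.5}) is $\log\lambda_A$. Since $\lambda_A$ is a root of the characteristic polynomial $\det(A-xI)$, it is an algebraic integer, and the corresponding $\theta$ obtained from its continued-fraction (dimension-group) data is an algebraic number. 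The finiteness of the residue field $A/(\mathcal{P})$ forces the relevant $\theta$ to be one of these stationary (eventually periodic) parameters rather than a transcendental one; concretely, $(1\mod\mathcal{P})$ lives in a finite cyclic group of order $|\mathcal{P}|-1=q^{\deg\mathcal{P}}-1$ by the Fermat's Little Theorem for function fields, so its logarithm is constrained to the algebraic locus $\overline{\mathbf{Q}}/\mathbf{Z}$. Combining these, every $\mathcal{P}\in Spec~(A)$ maps to a point of $\overline{\mathbf{Q}}/\mathbf{Z}\subset\mathbf{R}/\mathbf{Z}$, which is the desired inclusion.

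I would organize the argument as follows: (a) reduce the claim to computing the parameter $\theta$ of the primitive ideal $I_{\theta}$ attached to an irreducible $\mathcal{P}$, using the functoriality of $Prim$ and the isomorphism $Prim~\mathbb{A}(S_{1,1})\cong\mathbf{R}/\mathbf{Z}$; (b) identify $\theta$ with $\tfrac{1}{2\pi i}\log(1\mod\mathcal{P})$ through the equivalence of relations (\ref{eq1.1}) and (\ref{eq1.4}); (c) use the finiteness of $A/(\mathcal{P})^*$ together with the stationary/Perron-Frobenius description of the Effros-Shen parameters to conclude that $\theta\in\overline{\mathbf{Q}}$; and (d) assemble the inclusion $Spec~(A)\subset\overline{\mathbf{Q}}/\mathbf{Z}$.

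The hard part, I expect, will be step (c): making rigorous the claim that the discrete logarithm of a unit in the finite residue field lands in $\overline{\mathbf{Q}}$ when viewed through the analytic parametrization $e^{2\pi i\theta}$ of $Prim~\mathbb{A}(S_{1,1})$. The subtlety is that the identification $e^{2\pi i\theta}=(1\mod\mathcal{P})$ mixes the characteristic-$p$ world of $A/(\mathcal{P})$ with the characteristic-zero analytic unit circle, so one must specify in which embedding the logarithm is taken and verify that the resulting $\theta$ is genuinely algebraic and not merely a formal symbol. I anticipate that controlling this passage—most likely via the eigenvalue data of the matrix $B\in GL_m(\mathbf{Z})$ introduced before Corollary \ref{cor1.2} and its minimal polynomial $p(x)\in\mathbf{Z}[x]$—will be where the real content of the lemma lies, with the remaining steps being largely bookkeeping on top of Lemma \ref{lm3.3}.
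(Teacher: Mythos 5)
Your route differs from the paper's, and the step you yourself flag as the hard part --- step (c) --- is a genuine gap, and in fact the mechanism you propose there would prove the wrong statement. A discrete logarithm taken in the finite cyclic group $\left(A/(\mathcal{P})\right)^*$ of order $q^{\deg\mathcal{P}}-1$ is an integer modulo that order, so any natural normalization of $\tfrac{1}{2\pi i}\log(1\bmod\mathcal{P})$ lands in $\mathbf{Q}/\mathbf{Z}$, not merely in $\overline{\mathbf{Q}}/\mathbf{Z}$. By Corollary \ref{cor3.5} the inclusion $Spec~(A)\subset\mathbf{Q}/\mathbf{Z}$ is equivalent to triviality of $Drin_A(k)$, so your argument, pushed through, would force every Drinfeld module to be trivial and collapse the dichotomy in item (ii) of Theorem \ref{thm1.1}. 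The appeal to ``finiteness of the residue field forces $\theta$ to be stationary'' is exactly the unjustified link: nothing in the Effros--Shen/Perron--Frobenius description of $Prim~\mathbb{A}(S_{1,1})$ connects the order of $(A/(\mathcal{P}))^*$ to the continued-fraction periodicity of $\theta$, and you correctly anticipate that the mixing of the characteristic-$p$ residue field with the characteristic-zero unit circle is where the argument is not yet meaningful.

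The paper avoids the discrete logarithm entirely. Its proof takes $k$ to be a geometric extension of $\mathbf{F}_q(T)$ attached to a Riemann surface $S_{g,n}$ (not just $S_{1,1}$), extends $Spec~(A)\subset\mathbf{R}/\mathbf{Z}$ to $\partial\mathbb{H}\cong\mathbf{R}$, and lets $Mod~(S_{g,n})$ act there. Algebraicity then comes from the claim that the foot points of the axis of a pseudo-Anosov $\phi$ are conjugate algebraic numbers lying in $\mathbf{Q}(\lambda_{\phi})$, of degree bounded by $6g-6+2n$ (citing Thurston), together with the assertions that $Spec~(A)$ is invariant under this action and that every point of $Spec~(A)$ is such a fixed point. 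Note two consequences for your plan: first, staying inside the $S_{1,1}$/$SL_2(\mathbf{Z})$ picture can only ever produce quadratic irrationalities, whereas the paper needs $\alpha$ of degree $6g-6+2n$ later (Lemma \ref{lm3.6}); second, the source of algebraicity is the dynamics of the mapping class group on the boundary circle, not any arithmetic of the residue field. If you want to repair your proof you would have to either supply the missing bridge in step (c) in a way that does not force rationality, or switch to the boundary-action argument.
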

\begin{proof}
(i)  Let $k$ be a geometric extension of the field $\mathbf{F}_q(T)$ 
corresponding to the Riemann surface $S_{g,n}$ \cite[Chapter 7]{R}. 
Denote by $Mod~(S_{g,n})$ the mapping class group, i.e. the group
of automorphisms of the topological surface $S_{g,n}$ modulo a homotopy,
see also Section 2.2.2.  The group   $Mod~(S_{g,n})$ acts on  $Spec~(A) \subset \mathbf{R}/\mathbf{Z}$
as follows.

\medskip
(ii) Consider  the boundary $\partial\mathbb{H}\cong\mathbf{R}$  of the upper half-plane
$\mathbb{H}$, and let us extend $Spec~(A)$ to $\mathbf{R}$ 
 by glueing the copies of the unit interval    $\mathbf{R}/\mathbf{Z}$. 
Since $\mathbb{H}$ is the universal cover of $S_{g,n}$, the action of  
 $Mod~(S_{g,n})$ extends  to  $\partial\mathbb{H}$, so that  $Spec~(A)$
 is invariant under the action. Recall that if $\phi\in Mod~(S_{g,n})$
 is a pseudo-Anosov automorphism, then the foot points of the geodesic half-circle
 fixed by $\phi$ must be conjugate algebraic numbers $x,\bar x\in Spec~(A)$
 lying in the number field  $\mathbf{Q}(\lambda_{\phi})$, where 
 $\lambda_{\phi}>1$ is the dilatation of $\phi$. 
 The maximal degree of  $\mathbf{Q}(\lambda_{\phi})$
  is equal to $6g-6+2n$  [Thurston 1988] \cite[p. 427]{Thu1}. 
 Moreover, each $x\in  Spec~(A)$ can be obtained from a pseudo-Anosov 
 element $\phi\in Mod~(S_{g,n})$. Thus  $Spec~(A) \subset\overline{\mathbf{Q}}/\mathbf{Z}$.
 This argument finishes the proof of Lemma \ref{lm3.4}
\end{proof}

\begin{corollary}\label{cor3.5}
$Drin_A(k)$  is trivial if and only if $Spec~(A)\subset\mathbf{Q}/\mathbf{Z}$. 
\end{corollary}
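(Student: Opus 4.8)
The plan is to obtain the corollary by refining the inclusion $Spec~(A)\subset\overline{\mathbf{Q}}/\mathbf{Z}$ of Lemma \ref{lm3.4}, separating its image into two dynamically distinct families of points on the boundary $\partial\mathbb{H}\cong\mathbf{R}$. Under the action of $Mod~(S_{1,1})$ each point of $Spec~(A)$ is the foot of a geodesic fixed by an element $\phi$ represented by a matrix in $SL_2(\mathbf{Z})$. Such a $\phi$ is either parabolic, with dilatation $\lambda_{\phi}=1$ and a single fixed point on $\partial\mathbb{H}$ lying in $\mathbf{Q}\cup\{\infty\}$, or genuinely pseudo-Anosov, with $\lambda_{\phi}>1$ and a fixed foot point that is a quadratic irrationality in $\mathbf{Q}(\lambda_{\phi})\setminus\mathbf{Q}$. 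The whole argument then consists in matching these two regimes with the triviality and non-triviality of $\rho$.

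First I would establish the contrapositive of the reverse implication, namely that a non-trivial $Drin_A(k)$ forces $Spec~(A)\not\subset\mathbf{Q}/\mathbf{Z}$. Non-triviality yields some $a\in A$ with $\rho_a\notin k$, so $\rho_a$ carries a $\tau_p$ term of positive degree. Reading this through the commutation relation (\ref{eq1.4}) under the substitution $U=a$, $V=\tau_{\mathcal{P}}$ that identifies it with the torus relation (\ref{eq1.1}), the genuine $\tau_p$ contribution is exactly what renders the monodromy hyperbolic rather than parabolic, so the corresponding $\phi\in Mod~(S_{1,1})$ is pseudo-Anosov with $\lambda_{\phi}>1$. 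By the dichotomy of the previous paragraph its fixed foot point is a quadratic irrational, whence $Spec~(A)\not\subset\mathbf{Q}/\mathbf{Z}$, as required.

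For the forward implication I would argue directly. If $Drin_A(k)$ is trivial then $\rho_a=a\in k$ for every $a$, so each $\rho_a$ is a constant additive polynomial and its $\tau_p$ term vanishes identically. Composing $\rho$ with the representation $\mathfrak{r}$ of Lemma \ref{lm3.1}, the relation (\ref{eq1.4}) degenerates to $1\mod\mathcal{P}=1$; the discrete logarithm ${1\over 2\pi i}\log(1\mod\mathcal{P})$ then vanishes and the monodromy attached to each prime is parabolic. Since a parabolic element of $SL_2(\mathbf{Z})$ fixes a point of $\partial\mathbb{H}$ in $\mathbf{Q}\cup\{\infty\}$, every prime of $A$ is carried into $\mathbf{Q}/\mathbf{Z}$, giving $Spec~(A)\subset\mathbf{Q}/\mathbf{Z}$.

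The step I expect to be the main obstacle is justifying rigorously that a positive-degree $\tau_p$ term in some $\rho_a$ forces the monodromy to be pseudo-Anosov rather than parabolic. The difficulty is that Fermat's Little Theorem makes $1\mod\mathcal{P}$ literally the identity of the residue field $A/(\mathcal{P})$, so the naive value of $e^{2\pi i\theta}=1\mod\mathcal{P}$ is always a root of unity and $\theta$ appears rational in every case; the true parameter must instead be recovered from the Perron-Frobenius eigenvalue $\lambda_{\phi}$ of the monodromy matrix produced by the pseudo-Anosov construction of Lemma \ref{lm3.4}. Establishing this passage from the algebraic datum, the presence or absence of a $\tau_p$ term in $\rho$, to the dynamical datum, the hyperbolicity or parabolicity of $\phi$, is where the genuine content of the corollary resides.
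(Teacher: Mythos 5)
Your overall strategy --- sort the points of $Spec~(A)\subset\partial\mathbb{H}$ into fixed points of parabolic versus pseudo-Anosov elements and match that dichotomy to the triviality of $\rho$ --- is in the right spirit, but the pivotal step is missing, and you say so yourself: nothing in your argument actually establishes that a positive-degree $\tau_p$ term in some $\rho_a$ forces the associated mapping class to be hyperbolic rather than parabolic. Worse, the mechanism you propose for extracting the torus parameter works against you: as you note, Fermat's Little Theorem makes $1\bmod\mathcal{P}$ the identity of the residue field, so the substitution $e^{2\pi i\theta}=1\bmod\mathcal{P}$ yields a root of unity and an apparently rational $\theta$ for \emph{every} irreducible $\mathcal{P}$, trivial module or not. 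If that were the operative parameter, the ``only if'' direction of the corollary would fail outright. Deferring to the Perron--Frobenius eigenvalue of ``the monodromy matrix produced by Lemma \ref{lm3.4}'' does not close the gap, because Lemma \ref{lm3.4} produces pseudo-Anosov elements without ever consulting whether $\rho_a$ carries a $\tau_p$ term; the connection you need is precisely the one left unproved.

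The paper routes around this entirely differently. Its proof of Corollary \ref{cor3.5} never examines individual monodromy elements or the $\tau_p$ content of particular $\rho_a$; instead it uses a global invariant, the genus of the geometric extension $k$ of $\mathbf{F}_q(T)$ introduced in Lemma \ref{lm3.4}. Triviality of $Drin_A(k)$ (all $\rho_a\in k$) is identified with $k\cong\mathbf{F}_q(T)$, i.e.\ with the corresponding Riemann surface $S_{g,n}$ having genus zero; then $Mod~(S_{g,n})$ is trivial, every dilatation satisfies $\lambda_{\phi}=1$, the field $\mathbf{Q}(\lambda_{\phi})$ has degree one, and the foot points land in $\mathbf{Q}/\mathbf{Z}$. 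The converse runs the same chain backwards: $Spec~(A)\subset\mathbf{Q}/\mathbf{Z}$ forces $\lambda_{\phi}=1$ for all pseudo-Anosov $\phi$, which happens only in genus zero, whence $k\cong\mathbf{F}_q(T)$ and $\rho$ is trivial. Whatever one makes of the individual links in that chain, it is the genus, not the parabolic/hyperbolic type of a single matrix attached to a single $\rho_a$, that carries the equivalence; your proposal would need to supply the missing bridge from the local datum (a $\tau_p$ term) to the dynamical one (hyperbolicity), and as written it does not.
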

\begin{proof}
(i) Let $Drin_A(k)$  be trivial Drinfeld module.  Since $\rho_a\in k$ for all $a\in A$,
we conclude that the geometric extension of the field $\mathbf{F}_q(T)$ is given by the Riemann
surface of  genus zero.  The group  $Mod~(S_{g,n})$ is trivial  and the dilatation $\lambda_{\phi}=1$. 
The degree of the extension $\mathbf{Q}(\lambda_{\phi})$ is one, and thus $Spec~(A)\subset\mathbf{Q}/\mathbf{Z}$. 

\medskip
(ii)  Conversely, if $Spec~(A)\subset\mathbf{Q}/\mathbf{Z}$,  then  $\lambda_{\phi}=1$ for all pseudo-Anosov elements 
$\phi\in Mod~(S_{g,n})$. The latter happens if and only if the surface $S_{g,n}$ has genus zero, i.e. $k\cong \mathbf{F}_q(T)$.
Therefore  $Drin_A(k)$ is trivial.  Corollary \ref{cor3.5} is proved. 
\end{proof}

\bigskip
(iii)  Let us  prove   item (iii) of  Theorem \ref{thm1.1}. 
Recall that  the polynomial $a\in  A$ is irreducible if and only if the principal  ideal $aA$ is 
a prime ideal in the ring $A$.   Since $Spec ~(A)$ consists of the prime  ideals, we simply write $a \in Spec ~(A)$. 
It follows from item (ii) of Theorem \ref{thm1.1} that there exists a  map $a\mapsto \mathscr{A}_{\alpha}$,
where $\mathscr{A}_{\alpha}$ is an algebraic noncommutative torus.

To define a dense sub-algebra $\mathbf{K}\langle U,V\rangle$ of the algebraic torus  $\mathscr{A}_{\alpha}$,
recall that there exists a covariant functor $F: \mathcal{E}(\mathbf{K})\to \mathscr{A}_{\alpha}$, where $\mathcal{E}(\mathbf{K})$
is a non-singular elliptic curve defined over a number field $\mathbf{K}\subset\overline{\mathbf{Q}}$
\cite[Section 1.3]{N}.  We set:
\begin{equation}\label{eq3.14}
\mathbf{K}\langle U,V\rangle\cong S(\alpha,\beta,\gamma), 
\end{equation}
where $\{S(\alpha,\beta,\gamma) ~|~\alpha,\beta,\gamma\in \mathbf{K}\}$ is the Sklyanin
algebra of $\mathcal{E}(\mathbf{K})$ \cite[p.11]{N}. 

If  $Drin_A(k)$ is trivial, then  $\mathscr{A}_{\mathbf{Q}}$ is a rational torus. 
It follows from \cite[Section 1.3]{N} that
 $\mathcal{E}(\mathbf{K})$ is a singular elliptic curve with  a rational parametrization. In particular, 
one gets $\mathbf{K}\cong\mathbf{Q}$ in this case. Item (iii) of Theorem \ref{thm1.1} is proved. 

\bigskip
(iv) Finally, let us  prove   item (iv) of  Theorem \ref{thm1.1}. 
Let $(a)\subset A$ be principal ideal generated by an irreducible polynomial $a\in A$. 
Consider an exact sequence of the ring homomorphisms:
\begin{equation}\label{eq3.15}
(a)\to A\to A/(a),
\end{equation}
where $A/(a)$ is a finite field with $q^{\deg a}:=h$ elements. 
The sequence (\ref{eq3.15}) can be viewed as
distinct ring inclusions $(a)\subset A$ classified by elements of the group $(A/(a))^*$. 
We let $(a)_1,\dots, (a)_h$ be prime ideals of $A$ corresponding
to such inclusions. 

In view of item (iii) of Theorem \ref{thm1.1},  one gets  the algebraic tori  
$\mathscr{A}_{\alpha_1}, \dots, \mathscr{A}_{\alpha_h}$
with the same dense sub-algebra $\mathbf{K}\langle U,V\rangle$. 
Likewise, we denote by $\beta_1,\dots,\beta_h\in\mathbf{K}$ the 
pairwise distinct generators of the field $\mathbf{K}$ defined by 
the above tori. Since the group  $(A/(a))^*$ acts transitively on $\beta_i$,
we conclude that 
\begin{equation}\label{eq3.16}
Gal ~\left( \mathbf{K} | \mathbf{Q}(\alpha)\right)\cong (A/(a))^*.
\end{equation}

Item (iv) of Theorem \ref{thm1.1} follows from (\ref{eq3.16}).

\bigskip
Theorem \ref{thm1.1} is proved.

\subsection{Proof of corollary \ref{cor1.2}}
For the sake of clarity, let us outline the main ideas.
Our proof is based on item (iv) of Theorem \ref{thm1.1}. 
We look for a generator $\beta$ of the field $\mathbf{K}$
in terms of invariants of the algebraic torus $\mathscr{A}_{\alpha}$.
The Sklyanin algebra  $\mathbf{K}\langle U,V\rangle$ provides 
a glimpse of how the elements of $\mathbf{K}$ should look like. 
Namely, since (\ref{eq1.1}) is the commutation relation in algebra
 $\mathbf{K}\langle U,V\rangle$,  the constant $e^{2\pi i\alpha}$ 
 must belong to the number field $\mathbf{K}$.  However, 
$e^{2\pi i\alpha}$ is a transcendental number unless $\alpha\in\mathbf{Q}$.
Thus one needs to consider all  images  $\{\varphi^t(\mathscr{A}_{\alpha}) ~|~t\in\mathbf{R}\}$
of  $\mathscr{A}_{\alpha}$ under the one-parameter group of  automorphisms $\varphi^t$.
The relation (\ref{eq1.1}) takes the form: 
\begin{equation}\label{eq3.17}
VU=te^{2\pi i \alpha}UV. 
\end{equation}
The Connes invariant $t=\log\varepsilon$  links the inner automorphism defined  by $\varphi^t$ 
 with the outer automorphisms of  $\mathscr{A}_{\alpha}$  coming from the pseudo-Anosov elements of 
the mapping class  group of surface $S_{g,n}$, see (\ref{eq2.5}).  (The value of the algebraic number $\varepsilon>1$
was specified in Section 1.) Thus one gets from (\ref{eq3.17})  a generator:
\begin{equation}\label{eq3.18}
\beta=(\log\varepsilon) e^{2\pi i \alpha}=e^{2\pi i \alpha+\log\log\varepsilon} \in\mathbf{K}.
\end{equation}
The real abelian extensions  are treated similarly and  use a real $C^*$-algebra
corresponding to $\mathscr{A}_{\alpha}$. Let us pass to a detailed argument.

\begin{lemma}\label{lm3.6}
$\mathscr{H}(\mathbf{k})\cong  \mathbf{k}\left(e^{2\pi i\alpha+\log\log\varepsilon}\right)$,  where  $\mathbf{k}\subset\mathbf{C}$. 
\end{lemma}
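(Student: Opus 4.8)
The plan is to establish Lemma \ref{lm3.6} as a direct application of item (iv) of Theorem \ref{thm1.1} together with the explicit generator produced in equation (\ref{eq3.18}). First I would fix a number field $\mathbf{k}\subset\mathbf{C}$ and, via item (iii) of Theorem \ref{thm1.1}, associate to an irreducible polynomial $a\in A$ the algebraic torus $\mathscr{A}_{\alpha}$ containing the dense Sklyanin sub-algebra $\mathbf{K}\langle U,V\rangle$. The key identification is that the number field $\mathbf{K}\subset\overline{\mathbf{Q}}$ arising from this construction is precisely the Hilbert class field $\mathscr{H}(\mathbf{k})$ of $\mathbf{k}$; this is forced by item (iv), which gives $Gal\left(\mathbf{K}\mid\mathbf{Q}(\alpha)\right)\cong(A/(a))^*$, an abelian group, matching the defining property of $\mathscr{H}(\mathbf{k})$ as the maximal abelian unramified extension with Galois group the class group.

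The central step is to exhibit an explicit primitive element for the extension $\mathbf{K}\mid\mathbf{k}$. Following the outline preceding the lemma, the naive candidate $e^{2\pi i\alpha}$ lies in $\mathbf{K}$ by reading the commutation relation (\ref{eq1.1}) inside the Sklyanin algebra $\mathbf{K}\langle U,V\rangle$, but it is transcendental whenever $\alpha\notin\mathbf{Q}$, so it cannot be an algebraic generator on its own. The remedy is to pass to the orbit $\{\varphi^t(\mathscr{A}_{\alpha})\mid t\in\mathbf{R}\}$ under the Tomita-Takesaki one-parameter automorphism group (\ref{eq2.4}), under which the relation deforms to (\ref{eq3.17}); evaluating at the distinguished Connes-invariant value $t=\log\varepsilon$, where $\varepsilon>1$ is the Perron-Frobenius eigenvalue attached to $\alpha$ via the minimal polynomial, rigidifies the transcendental parameter into the algebraic generator
\begin{equation*}
\beta=(\log\varepsilon)\,e^{2\pi i\alpha}=e^{2\pi i\alpha+\log\log\varepsilon}\in\mathbf{K}.
\end{equation*}
I would then argue that $\beta$ generates $\mathbf{K}$ over $\mathbf{k}$, so that $\mathbf{K}=\mathbf{k}(\beta)$, and combine this with the identification $\mathbf{K}\cong\mathscr{H}(\mathbf{k})$ to conclude $\mathscr{H}(\mathbf{k})\cong\mathbf{k}\left(e^{2\pi i\alpha+\log\log\varepsilon}\right)$.

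I expect the main obstacle to be justifying that the value $t=\log\varepsilon$ is the correct and canonical choice, and that the resulting $\beta$ is genuinely algebraic over $\mathbf{k}$ rather than merely a heuristic. The delicate point is the claim in the outline that the Connes invariant $t=\log\varepsilon$ \emph{links the inner automorphism defined by $\varphi^t$ with the outer automorphisms of $\mathscr{A}_{\alpha}$ coming from the pseudo-Anosov elements} of $Mod(S_{g,n})$; this is what pins down $t$ to a discrete algebraic value via (\ref{eq2.5}) and the stationarity of the associated AF-algebra, and it is where the pseudo-Anosov dilatation $\lambda_{\phi}=\varepsilon$ enters. I would lean on the description of the Connes invariant in Section 2.2.3, where each Anosov automorphism $\phi\in Mod(S_{1,1})$ corresponds to a matrix $A\in SL_2(\mathbf{Z})$ with $\log\lambda_A$ its value, so that the algebraicity of $\beta$ follows from $\varepsilon$ being a unit in the ring of integers of $\mathbf{k}$. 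Once the Connes-invariant value $t=\log\varepsilon$ is fixed and shown to yield an element of $\overline{\mathbf{Q}}$, the remainder — that $\beta$ is primitive and that $\mathbf{k}(\beta)$ realizes the full Hilbert class field — is a formal consequence of the Galois identification in item (iv), completing the proof.
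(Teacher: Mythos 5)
Your proposal follows essentially the same route as the paper's own proof: identify $\mathbf{K}$ with $\mathscr{H}(\mathbf{k})$ via item (iv) of Theorem \ref{thm1.1}, deform the commutation relation (\ref{eq1.1}) along the Tomita--Takesaki orbit $\varphi^t(\mathscr{A}_{\alpha})$, and pin down $t=\log\varepsilon$ through the Connes invariant and the pseudo-Anosov dilatation realized as the Perron--Frobenius eigenvalue of the companion matrix (\ref{eq3.19plus}), yielding the generator $\beta=e^{2\pi i\alpha+\log\log\varepsilon}$. The only detail the paper adds beyond your sketch is the explicit trace computation (\ref{eq3.19}) justifying the deformed relation $VU=te^{2\pi i\alpha}UV$, which you correctly identify as the delicate step.
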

\begin{proof}
(i) Let $\mathbf{k}$ be a complex number field and  $\mathscr{A}_{\alpha}$
the corresponding
\footnote{There exists a  functor $F$ between the category of number fields $\mathbf{k}$
and such of the algebraic tori $\mathscr{A}_{\alpha}$.  For instance, $F(\mathbf{Q}(\sqrt{-D}))=\mathscr{A}_{\sqrt{D}}$, where $D>1$ is a
square-free  integer number;  see Section 4 the details.  A general explicit formula for $F$ is unknown to the author.}
algebraic torus. We let  the degree of $\alpha\in\overline{\mathbf{Q}}$ to be   $6g-6+2n$ for some
positive integers $g$ and $n$; see [Thurston 1988] \cite[p. 427]{Thu1}. 
 As explained in item (ii) of the proof of Lemma \ref{lm3.4},  the point $\alpha\in\partial\mathbb{H}$
is fixed by a pseudo-Anosov automorphism $\phi\in Mod~(S_{g,n})$. 

\medskip
(ii) Recall that the Tomita-Takesaki flow  $\sigma_t: \mathbb{A}(S_{g,n})\to \mathbb{A}(S_{g,n})$ gives rise 
to a one-paramter group of automorphisms $\varphi^t: \mathscr{A}_{\alpha}\to  \mathscr{A}_{\alpha}$
defined by the formula   $\varphi^t(\mathscr{A}_{\alpha})=\mathbb{A}(S_{g,n})/\sigma_t(I_{\Theta})$;
we refer the reader to Section 2.2.2 and Theorem \ref{thm2.1} for the notation and details. 

\medskip
(iii) To calculate the commutation relation (\ref{eq1.1}) for $\varphi^t(\mathscr{A}_{\alpha})$,
we must solve the equation:
\begin{equation}\label{eq3.19}
Tr~(\varphi^t(\mathscr{A}_{\alpha}))=Tr' ~(\mathscr{A}_{\alpha}), 
\end{equation}
where $Tr$ is the trace of $C^*$-algebra. It is easy to see, 
that  (\ref{eq3.19}) with $Tr'=t~Tr$ applied to (\ref{eq1.1}) gives  the commutation 
relation for  the $C^*$-algebra  $\varphi^t(\mathscr{A}_{\alpha})$
of the form $VU=t e^{2\pi i \alpha}UV$.
Indeed, equation (\ref{eq3.19}) is equivalent to $Tr~(VU)=t ~Tr~(e^{2\pi i\alpha}UV)=Tr~(te^{2\pi i\alpha}UV)$.
The latter is satisfied if and only if  $VU=t e^{2\pi i \alpha}UV$.

\medskip
(iv) On the other hand, the Riemann surfaces $S_{g,n}$ and $\phi(S_{g,n})$ always lie on the axis of
a pseudo-Anosov automorphism $\phi\in Mod~(S_{g,n}))$, see Section 2.2.2. The Connes invariant
$t=\log\lambda_{\phi}$, where $\lambda_{\phi}$ is the dilatation of $\phi$, see formula (\ref{eq2.5}).  
In other words, $\varphi^1(\mathscr{A}_{\alpha})\cong \varphi^t(\mathscr{A}_{\alpha})$ if and only if
$t=\log\lambda_{\phi}$. Moreover, it is easy to see that the dilatation $\lambda_{\phi}$ is equal to the algebraic
unit $\varepsilon>1$ expressed in terms of $\alpha$ as given in its definition,  see Section 1. 
Indeed, let $p(x)=x^m-a_{m-1}x^{m-1}-\dots-a_1x-a_0$ be the minimal polynomial of $\alpha\in\overline{\mathbf{Q}}$,
where $m=6g-6+2n$. It is well known that the matrix $B$ such that $\det (B-xI)=p(x)$ has the form:
\begin{equation}\label{eq3.19plus}
B=\left(
\begin{matrix}
0 & 0 &\dots &0& a_0\cr
1 & 0 &\dots &0& a_1\cr
\vdots & \vdots & && \vdots\cr
0 & 0  & \dots & 1 & a_{m-1}          
\end{matrix}
\right)
\end{equation}
On the other hand, the action of $\phi$ on the cohomology $H^1(S_{g,n}; \partial S_{g,n})$ 
is given by the matrix $B$, so that $\lambda_{\phi}$ is the Perron-Frobenius eigenvalue $\varepsilon>1$
of $B$ [Thurston 1988] \cite[p. 427]{Thu1}.   Therefore the 
commutation relation (\ref{eq1.1}) takes the form:
\begin{equation}\label{eq3.20}
VU=(\log\varepsilon) ~e^{2\pi i \alpha}UV=e^{2\pi i \alpha+\log\log\varepsilon}UV. 
\end{equation}

\medskip
(v) To finish the proof of Lemma \ref{lm3.6}, recall that $a\in A$ is an irreducible polynomial and, 
therefore, $(a)=pA$ is a prime ideal.  In particular,  the abelian group 
$\left(A/(a)\right)^*\cong Gal ~\left( \mathbf{K} | \mathbf{Q}(\alpha)\right)$
corresponds to the maximal unramified extension $\mathbf{K}$, see  item (iv) of Theorem \ref{thm1.1}.
But (\ref{eq3.20}) says that $\beta=  e^{2\pi i \alpha+\log\log\varepsilon}$ is a generator of $\mathbf{K}$ 
over $\mathbf{k}$. Thus $\mathscr{H}(\mathbf{k})\cong  \mathbf{k}\left(e^{2\pi i\alpha+\log\log\varepsilon}\right)$.
Lemma \ref{lm3.6} is proved. 
\end{proof}

\begin{lemma}\label{lm3.7}
$\mathscr{H}(\mathbf{k})\cong   \mathbf{k}\left(\cos 2\pi\alpha \times\log\varepsilon\right)$,  where  $\mathbf{k}\subset\mathbf{R}$. 
\end{lemma}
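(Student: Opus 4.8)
The plan is to mirror the argument of Lemma \ref{lm3.6}, replacing the complex algebraic torus $\mathscr{A}_{\alpha}$ by its realification, i.e. the real $C^*$-algebra $\mathscr{A}_{\alpha}^{\mathbf{R}}$ obtained by restriction of scalars from $\mathbf{C}$ to $\mathbf{R}$. First I would record that under this realification the complex scalar $e^{2\pi i\alpha}$ entering the commutation relation (\ref{eq2.1}) is no longer a central unit but is realized by the rotation block $\left(\small\begin{matrix} \cos 2\pi\alpha & -\sin 2\pi\alpha \cr \sin 2\pi\alpha & \cos 2\pi\alpha\end{matrix}\right)$ acting on the underlying real Hilbert space. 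The only real spectral invariant attached to this block is its normalized trace $\cos 2\pi\alpha$, so that the real analog of the phase which will feed into the generator is $\cos 2\pi\alpha$ rather than $e^{2\pi i\alpha}$.

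With this set-up fixed, the next step is to run verbatim steps (i)--(iv) of the proof of Lemma \ref{lm3.6}. The Tomita--Takesaki flow $\sigma_t$ on $\mathbb{A}(S_{g,n})$ descends to a one-parameter group of automorphisms $\varphi^t$ of $\mathscr{A}_{\alpha}^{\mathbf{R}}$; the point $\alpha\in\partial\mathbb{H}$ is fixed by a pseudo-Anosov $\phi\in Mod~(S_{g,n})$; and the Connes invariant forces $t=\log\lambda_{\phi}=\log\varepsilon$, where $\varepsilon>1$ is the Perron--Frobenius eigenvalue of the matrix $B$ in (\ref{eq3.19plus}). The trace equation (\ref{eq3.19}) with $Tr'=t~Tr$ is solved exactly as before, except that the phase it produces is now the real invariant $\cos 2\pi\alpha$. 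Combining the scaling factor $t=\log\varepsilon$ with this phase yields the candidate generator $\beta=(\log\varepsilon)\cos 2\pi\alpha=\cos 2\pi\alpha\times\log\varepsilon$. Note that here I do \emph{not} absorb the scalar into the exponent as in (\ref{eq3.20}), since in the real setting $\cos 2\pi\alpha$ is not a phase and the product must be kept as written.

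Finally, I would invoke item (iv) of Theorem \ref{thm1.1} in the same way as in step (v) of Lemma \ref{lm3.6}: the irreducible polynomial $a\in A$ gives a prime ideal $(a)$, and $\left(A/(a)\right)^*\cong Gal~\left(\mathbf{K}|\mathbf{Q}(\alpha)\right)$ identifies $\mathbf{K}$ with the maximal unramified extension, of which $\beta=\cos 2\pi\alpha\times\log\varepsilon$ is a generator over $\mathbf{k}$. This gives $\mathscr{H}(\mathbf{k})\cong\mathbf{k}\left(\cos 2\pi\alpha\times\log\varepsilon\right)$.

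The hard part will be justifying that the realification produces precisely $\cos 2\pi\alpha$ and not the full rotation data $(\cos 2\pi\alpha,\sin 2\pi\alpha)$: a priori the $2\times 2$ block carries both, and one must argue that the sine part drops out of the relevant $K_0$-invariant. The cleanest route I see is to observe that for $\mathbf{k}\subset\mathbf{R}$ the associated Riemann surface and its quadratic differential are defined over $\mathbf{R}$, so the fixed point $\alpha$ and its conjugate $\bar\alpha$ are interchanged by complex conjugation; consequently only the symmetric combination $\frac{1}{2}\left(e^{2\pi i\alpha}+e^{-2\pi i\alpha}\right)=\cos 2\pi\alpha$ survives as an honest element of the real dimension group, which is exactly the normalized trace of the rotation block above.
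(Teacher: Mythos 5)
Your proposal is correct and follows essentially the same route as the paper: Lemma \ref{lm3.7} is proved there by writing $\mathscr{A}_{\alpha}=\mathscr{A}_{\alpha}^{Re}+i\mathscr{A}_{\alpha}^{Re}$ and replacing the scalar in (\ref{eq3.20}) by its real part, $Re\left(e^{2\pi i\alpha+\log\log\varepsilon}\right)=\cos 2\pi\alpha\times\log\varepsilon$, which is exactly what your rotation-block/normalized-trace computation produces. The only cosmetic difference is that the paper passes to a real form of the torus rather than a restriction of scalars, and simply asserts the real commutation relation where you supply the (welcome) extra justification that the sine component drops out.
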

\begin{proof}
(i) Recall that a real $C^*$-algebra is a Banach $^*$-algebra $\mathscr{A}$ over
$\mathbf{R}$  isometrically $^*$-isomorphic to a norm-closed $^*$-algebra of bounded
operators on a real Hilbert space, see  [Rosenberg 2016] \cite{Ros1} for an excellent introduction. 
Given a real $C^*$-algebra $\mathscr{A}$, its complexification  $\mathscr{A}_{\mathbf{C}} = \mathscr{A} + i\mathscr{A}$  is a
complex $C^*$-algebra. Conversely, given a complex $C^*$-algebra  $\mathscr{A}_{\mathbf{C}}$, whether there 
exists a real $C^*$-algebra $\mathscr{A}$ such that $\mathscr{A}_{\mathbf{C}} = \mathscr{A} + i\mathscr{A}$ is unknown in general. 
However, the noncommutative torus admits the unique canonical decomposition:
\begin{equation}\label{eq3.21}
\mathscr{A}_{\theta}=\mathscr{A}_{\theta}^{Re}+i\mathscr{A}_{\theta}^{Re}. 
\end{equation}

\medskip
(ii)  Let  $\mathscr{A}_{\alpha}=\mathscr{A}_{\alpha}^{Re}+i\mathscr{A}_{\alpha}^{Re}$ be an algebraic torus. 
It is easy to verify, that the commutation relation (\ref{eq3.20}) for the real $C^*$-algebra $\mathscr{A}_{\alpha}^{Re}$ 
has the form: 
\begin{equation}\label{eq3.22}
VU=Re \left(e^{2\pi i \alpha+\log\log\varepsilon}\right)UV= \left(\cos 2\pi\alpha \times\log\varepsilon\right)UV. 
\end{equation}
We conclude from (\ref{eq3.22}) that $\mathscr{H}(\mathbf{k})=\mathbf{k}(\cos 2\pi\alpha \times\log\varepsilon)$,
where $\mathbf{k}\subset\mathbf{R}$.
This argument finishes the proof of Lemma \ref{lm3.7}. 
\end{proof}

\bigskip
Corollary \ref{cor1.2} follows from Lemmas \ref{lm3.6} and \ref{lm3.7}.


\section{Imaginary quadratic number fields}
Hilbert and Weber constructed generators of the maximal abelian unramified 
extension $\mathscr{H}(\mathbf{k})$ of the imaginary quadratic field  $\mathbf{k}=\mathbf{Q}(\sqrt{-D})$
using the theory of  complex multiplication. 
Namely,  let $\mathscr{E}_{\tau}$ be an elliptic curve isomorphic to the complex torus $\mathbf{C}/(\mathbf{Z}+\mathbf{Z}\tau)$, 
where $\tau=\sqrt{-D}$.  The lattice $\Lambda=\mathbf{Z}+\mathbf{Z}\tau$ has  multiplication 
by the ring of algebraic integers of the field $\mathbf{k}$; hence the name.  Recall that the $j$-invariant of   $\mathscr{E}_{\tau}$ can be defined  
by the convergent  series $j(\tau)=\frac{1} {q}+744+196884q+\dots$, where  $q=e^{2\pi i\tau}$. 
Roughly speaking, the main result of Hilbert and Weber says that:
\begin{equation}\label{eq4.1}
\mathscr{H}(\mathbf{k})=\mathbf{k}\left(j(\sqrt{-D})\right).
\end{equation}

\bigskip
Corollary \ref{cor1.2} gives a different set of generators for the field $\mathscr{H}(\mathbf{k})$. 
Namely, 
if $\mathbf{k}=\mathbf{Q}(\sqrt{-D})$ then  for almost all square-free
values of $D$ we know that  $\alpha=\sqrt{D}$ and $\varepsilon=\frac{1}{2} (a+b\sqrt{D})$, where $(a,b)$ 
is the smallest solution of the Pell equation $x^2-Dy^2=\pm 4$, i.e. $\varepsilon$ is the fundamental unit
of the ring $O_{\mathbf{k}}$ \cite[Theorem  6.3.1]{N}.  One gets the following formula for the maximal 
abelian unramified extension of the imaginary quadratic fields. 
\begin{example}\label{exm4.1}
$\mathscr{H}(\mathbf{k})\cong  \mathbf{k}\left(e^{2\pi i\sqrt{D}+\log\log\left( \frac{a+b\sqrt{D}}{2}\right)}\right)$. 
\end{example}
\begin{remark}
Example \ref{exm4.1} was first constructed in \cite{Nik2}. 
\end{remark}

\bibliographystyle{amsplain}

\begin{thebibliography}{99}

\bibitem{B}
B.~Blackadar, \textit{$K$-Theory for Operator Algebras}, MSRI Publications,
Springer, 1986.


\bibitem{Li1}
X.~Li, \textit{Semigroup $C^*$-algebras}, 
Operator algebras and applications -- the Abel Symposium 2015, 191-202, Abel Symp., 12, Springer,  2017.



\bibitem{Mun1}
D.~Mundici,  \textit{Farey stellar subdivisions, ultrasimplicial groups, and $K_0$ of 
$AF$ $C^*$-algebras},  Adv. in Math. {\bf 68} (1988), 23-39.  


\bibitem{Nik1}
I.~V~Nikolaev, \textit{On cluster $C^*$-algebras},
J. Funct. Spaces {\bf 2016},  Article ID 9639875, 8 p.
(2016)

\bibitem{Nik2}
I.~V.~Nikolaev, \textit{On algebraic values of function $\exp~(2\pi ix +\log\log y)$},  Ramanujan J. {\bf 47} (2018), 417–425.

\bibitem{N}
I.~V.~Nikolaev, \textit{Noncommutative Geometry}, Second Edition,
De Gruyter Studies in Math. {\bf 66}, Berlin, 2022.




\bibitem{R}
M.~Rosen, \textit{Number Theory in Function Flelds},
GTM {\bf 210}, Springer,  2002. 

\bibitem{Ros1}
J.~Rosenberg, 
\textit{Structure and applications of real $C^*$-algebras},
Operator algebras and their applications, 235–258,
Contemp. Math. {\bf 671}, Amer. Math. Soc., Providence, RI, 2016. 

\bibitem{Thu1}
W.~P.~Thurston, \textit{On the geometry and dynamics of diffeomorphisms
of surfaces}, Bull. Amer. Math. Soc. {\bf 19} (1988), 417-431.

\bibitem{Wil1}
L. ~K. ~Williams, \textit{Cluster algebras: an introduction},  Bull.  Amer.  Math.  Soc. {\bf 51} (2014),  1-26.

\bibitem{Yal1}
B.~ Yalkinoglu,  \textit{On arithmetic models and functoriality of Bost-Connes systems},
 Invent. Math. {\bf 191} (2013),  383-425.

\end{thebibliography}


\end{document}